\newtheorem{theorem}{Theorem}[section]
\newtheorem*{theorem*}{Theorem}
\newtheorem{lemma}[theorem]{Lemma}
\newtheorem*{lemma*}{Lemma}
\newtheorem{corollary}[theorem]{Corollary}
\newtheorem{proposition}[theorem]{Proposition}
\newtheorem{definition}[theorem]{Definition}
\newtheorem{remark}[theorem]{Remark}
\newtheorem{example}[theorem]{Example}
\theoremstyle{definition}
\newtheorem{thm}{Theorem}[section]
\newtheorem*{cor}{Corollary D}
\numberwithin{equation}{section}
\newcommand{\cG}{\mathcal G}
\newcommand{\cH}{\mathcal H}
\def\Bz{\mathbb{B}}
\def\Iz{\mathbb{I}}
\def\Nz{\mathbb{N}}
\def\Rz{\mathbb{R}}
\def\Zz{\mathbb{Z}}
\def\1z{\mathbb{1}}
\newcommand{\acts}{\curvearrowright}
\DeclareMathOperator{\supp}{supp}
\newcommand{\Ind}{\textup{Ind}}
\newcommand{\spn}{\textup{span}}
\renewcommand{\max}{\textup{max}}
\newcommand{\Max}{\textup{Max}}
\newcommand{\Coind}{\textup{Coind}}
\newcommand{\inn}[2]{\langle{#1},{#2}\rangle}
\begin{document}

\title[Maximal ideals of reduced group C*-algebras]{Maximal ideals of reduced group C*-algebras and Thompson's groups}

\author[K. A. Brix]{Kevin Aguyar Brix}
\address[K. A. Brix]{Centre for Mathematical Sciences, Lund University, Box 118, 221 00 Lund, Sweden}
\curraddr{Department of Mathematics and Computer Science, University of Southern Denmark, 5230 Odense, Denmark}
\email{kabrix.math@fastmail.com}

\author[C. Bruce]{Chris Bruce}
\address[C. Bruce]{School of Mathematics, Statistics and Physics, Herschel Building, Newcastle University, Newcastle upon Tyne, NE1 7RU, UK}
\email[Bruce]{chris.bruce@newcastle.ac.uk}

\author[K. Li]{Kang Li}
\address[K. Li]{Department of Mathematics, University of Erlangen–Nuremberg, Cauerstraße 11, 91058 Erlangen, Germany}
\email{kang.li@fau.de}

\author[E. Scarparo]{Eduardo Scarparo}
\address[E. Scarparo]{Center for Engineering, Federal University of Pelotas, R. Benjamin Constant 989,
96010-020, Pelotas/RS, Brazil}
\email{eduardo.scarparo@ufpel.edu.br}

\thanks{K. A. Brix was supported by a DFF-international postdoc (case number 1025-00004B) and a Starting grant from the Swedish Research Council (2023-03315).
C. Bruce has received funding from the European Union’s Horizon 2020 research and innovation programme under the Marie Sklodowska-Curie grant agreement No 101022531 and the European Research Council (ERC) under the European Union’s Horizon 2020 research and innovation programme (grant agreement No. 817597). 
This research was partially supported by the London Mathematical Society through a Collaborations with Developing Countries Grant (Ref No: 52218).}

\maketitle
\begin{abstract}
Given a conditional expectation $P$ from a C*-algebra $B$ onto a C*-subalgebra $A$, we observe that induction of ideals via $P$, together with a map which we call co-induction, forms a Galois connection between the lattices of ideals of $A$ and $B$. Using properties of this Galois connection, we show that, given a discrete group $G$ and a stabilizer subgroup $G_x$ for the action of $G$ on its Furstenberg boundary, induction gives a bijection between the set of maximal co-induced ideals of $C^*(G_x)$ and the set of maximal ideals of $C^*_r(G)$. 

As an application, we prove that the reduced C*-algebra of Thompson's group $T$ has a unique maximal ideal. Furthermore, we show that, if Thompson's group $F$ is amenable, then $C^*_r(T)$ has infinitely many ideals. 
\end{abstract}

\section{Introduction}

A discrete group $G$ is said to be \emph{C*-simple} if its reduced C*-algebra $C^*_r(G)$ is simple. In \cite{KK17}, Kalantar and Kennedy discovered a surprising connection between C*-simplicity and topological dynamics: they showed that $G$ is C*-simple if and only if the action of $G$ on its Furstenberg boundary $\partial_F G$ is free (that is, all stabilizer subgroups $G_x$ are trivial). 

Recently, Christensen and Neshveyev proved in \cite[Theorem A]{CN24} that, given an \'{e}tale groupoid $\cG$, any proper ideal of $C^*_r(\cG)$ is contained in a proper ideal induced from an isotropy group of $\cG$. 

Inspired by these results, given a group $G$, we study the relationship between ideals in $C^*_r(G)$ and ideals in $C^*(G_x)$, for $x\in\partial_F G$ (recall that $G_x$ is amenable for all $x\in\partial_FG$ by \cite[Proposition~2.7]{BKKO17}).

Let $P$ be a conditional expectation from a C*-algebra $B$ onto a C*-subalgebra $A$. We call the map that takes an ideal $I$ of $B$ into the ideal $\overline{P(I)}$ of $A$ \emph{co-induction}. Motivated by an observation of Kwa\'{s}niewski and Meyer in a related setting \cite[Section 2]{KM20}, we show that induction of ideals via $P$ in the sense of Rieffel (Definition~\ref{def:induced}), together with co-induction, forms a Galois connection between the lattices of ideals of $A$ and $B$. Using properties of this Galois connection, we prove the following:

\begin{thm}[Theorem \ref{thm:maxideals}]
Let $P$ be a conditional expectation from a unital C*-algebra $B$ onto a unital C*-algebra $A$. The following conditions are equivalent:
\begin{enumerate}[\upshape(1)]
    \item for every proper ideal $J$ of $B$, the ideal $\overline{P(J)}$ of $A$ is proper;
    \item every maximal ideal of $B$ is induced from a maximal ideal of $A$;
    \item induction defines a bijection between the set of maximal co-induced ideals of $A$ and the set of maximal ideals of $B$. 
\end{enumerate}
\end{thm}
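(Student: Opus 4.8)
My approach is to run everything through the Galois connection established above between the ideals of $A$ and the ideals of $B$, in which co-induction $J\mapsto\overline{P(J)}$ is left adjoint to induction $\Ind$; that is, $\overline{P(J)}\subseteq I$ if and only if $J\subseteq\Ind(I)$ for every ideal $I$ of $A$ and every ideal $J$ of $B$. I would first record the standard formal consequences: both maps are order preserving; $J\subseteq\Ind(\overline{P(J)})$ and $\overline{P(\Ind(I))}\subseteq I$; the composite $\Ind\circ\overline{P(\cdot)}$ is a closure operator on the ideals of $B$ whose closed ideals are precisely the \emph{induced} ones (those of the form $\Ind(I)$); the composite $\overline{P(\cdot)}\circ\Ind$ is an interior operator on the ideals of $A$ whose fixed points are precisely the co-induced ideals; and $\Ind$ and $\overline{P(\cdot)}$ restrict to mutually inverse order isomorphisms between the co-induced ideals of $A$ and the induced ideals of $B$. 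The analytic inputs are just that $\overline{P(B)}=A$ (since $P$ restricts to the identity on $A$), which forces $\Ind(I)=B$ exactly when $I=A$ and hence makes the displayed order isomorphism match $A\leftrightarrow B$ and therefore proper co-induced ideals of $A$ with proper induced ideals of $B$; and that, $A$ and $B$ being unital, every proper ideal of either is contained in a maximal one.

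Next I would isolate two elementary observations. \textbf{(a)} If $J$ is a proper ideal of $B$ with $\overline{P(J)}$ proper, then $J\subseteq\Ind(\overline{P(J)})\subsetneq B$, and choosing a maximal ideal $I$ of $A$ with $\overline{P(J)}\subseteq I$ yields $J\subseteq\Ind(I)\subsetneq B$; in particular, if $J$ is itself maximal then $J=\Ind(I)$ is induced from a maximal ideal of $A$. \textbf{(b)} If $J'$ is a proper ideal of $B$ with $\overline{P(J')}=A$, then $J'$ lies in no proper induced ideal of $B$, since $J'\subseteq\Ind(I)$ with $I$ proper would give $A=\overline{P(J')}\subseteq\overline{P(\Ind(I))}\subseteq I$, which is absurd. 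With these in hand I would close the cycle $(1)\Rightarrow(2)\Rightarrow(3)\Rightarrow(1)$. For $(1)\Rightarrow(2)$, apply (a) to each maximal ideal of $B$. For $(2)\Rightarrow(3)$: under $(2)$ every maximal ideal $J$ of $B$ is induced, so $J=\Ind(\overline{P(J)})$ with $\overline{P(J)}$ co-induced and proper, and a short argument using the maximality of $J$ together with the order isomorphism shows that $\overline{P(J)}$ is a maximal co-induced ideal of $A$; conversely, for a maximal co-induced ideal $I$ of $A$ one applies (a) to a maximal ideal of $B$ above $\Ind(I)$ to see that $\Ind(I)$ is itself maximal in $B$; since $\Ind$ and $\overline{P(\cdot)}$ are mutually inverse on co-induced/induced ideals, this produces the asserted bijection. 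For $(3)\Rightarrow(1)$: if some proper ideal $J$ of $B$ had $\overline{P(J)}=A$, enlarge it to a maximal ideal $J'$ of $B$; then $\overline{P(J')}=A$ as well, while $(3)$ writes $J'=\Ind(I)$ for a (necessarily proper) maximal co-induced ideal $I$ of $A$, contradicting (b).

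I expect the only delicate point to be the order-theoretic book-keeping in $(2)\Rightarrow(3)$: one must check carefully that the order isomorphism between co-induced ideals of $A$ and induced ideals of $B$ genuinely restricts to a bijection between the \emph{maximal} co-induced ideals of $A$ and \emph{all} maximal ideals of $B$, and not merely the maximal \emph{induced} ideals of $B$. This is exactly where hypothesis $(2)$ is used — it guarantees that every maximal ideal of $B$ is already induced — in tandem with the preservation of properness noted in the first paragraph. Everything else is a routine unwinding of the Galois connection, with no analytic subtlety beyond $\overline{P(B)}=A$ and the existence of maximal ideals in unital C*-algebras.
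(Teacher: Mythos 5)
Your proposal is correct and follows essentially the same route as the paper: everything is driven by the Galois connection of Proposition~\ref{prop:gc}, with (1)$\Rightarrow$(2) obtained from $J\subseteq\Ind_P(\Coind_P(J))$ plus a maximal ideal of $A$ above $\Coind_P(J)$, and (3)$\Rightarrow$(1) from the containments $\Coind_P(J)\subseteq\Coind_P(\Ind_P(I))\subseteq I$, exactly as in the paper's proof of Theorem~\ref{thm:maxideals}. The only genuine (and minor) difference is in the surjectivity half of (2)$\Rightarrow$(3): the paper enlarges $\Coind_P(\Ind_P(I))$ to a maximal co-induced ideal via Zorn's lemma (Lemma~\ref{lem:IinM}, which relies on $\Coind_P$ preserving joins), whereas you show directly that $\Coind_P(J)$ is already maximal among proper co-induced ideals whenever $J$ is a maximal, hence induced, ideal of $B$ --- a slightly more economical argument that bypasses Lemma~\ref{lem:IinM} altogether.
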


Combining this with \cite[Lemma 7.9]{BKKO17}, we obtain the following:

\begin{thm}[Theorem \ref{thm:mi}]\label{thm:thm}
    Let $G$ be a group and $x\in\partial_F G$. Then,
\begin{enumerate}[\upshape(1)]
    \item every maximal ideal of $C^*_r(G)$ is induced from a maximal ideal of $ C^*(G_x)$;
    \item induction gives a bijection between the set of maximal co-induced ideals of $C^*(G_x)$ and the set of maximal ideals of $C^*_r(G)$. 
\end{enumerate}
\end{thm}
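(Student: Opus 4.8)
The plan is to deduce Theorem~\ref{thm:mi} directly from Theorem~\ref{thm:maxideals} applied to the conditional expectation coming from the canonical embedding of $C^*(G_x)$ into $C^*_r(G)$, with the crucial input being \cite[Lemma 7.9]{BKKO17}. First I would recall the setup: since $x \in \partial_F G$, the stabilizer $G_x$ is amenable by \cite[Proposition~2.7]{BKKO17}, so $C^*(G_x) = C^*_r(G_x)$, and the inclusion $\lambda_{G_x} \hookrightarrow \lambda_G$ of group algebras induces a unital embedding $C^*(G_x) \hookrightarrow C^*_r(G)$. This embedding admits a canonical conditional expectation $P \colon C^*_r(G) \to C^*(G_x)$: one takes the faithful canonical trace-like expectation $E \colon C^*_r(G) \to \ell^\infty(G/G_x)$-style construction, but more to the point, the existence of a conditional expectation from $C^*_r(G)$ onto the copy of $C^*(G_x)$ is exactly the content (or an immediate consequence) of the Boundary map / \cite[Lemma 7.9]{BKKO17}. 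I would cite that lemma for the existence of $P$ together with the key positivity/properness property it supplies.

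The heart of the argument is verifying condition (1) of Theorem~\ref{thm:maxideals} for this $P$: namely, that $\overline{P(J)}$ is a proper ideal of $C^*(G_x)$ whenever $J$ is a proper ideal of $C^*_r(G)$. This is precisely where \cite[Lemma 7.9]{BKKO17} enters — that lemma (in the Breuillard–Kalantar–Kennedy–Ozawa circle of ideas) says that for any nonzero ideal $J \trianglelefteq C^*_r(G)$ one cannot have $1 \in \overline{P(J)}$; equivalently, the restriction of $P$ to a suitable sense detects that $J$ meets $C^*(G_x)$ only in a proper ideal. Concretely, I would argue: if $\overline{P(J)} = C^*(G_x)$ then $1 \in \overline{P(J)}$, so there is $a \in J$ with $P(a)$ close to $1$, hence (by taking $a^*a$ and using that $P$ is a positive unital map, together with the multiplicative-domain / Schwarz estimate) $J$ would contain an element arbitrarily close to an invertible one, forcing $J = C^*_r(G)$ — contradicting properness. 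The precise form of this implication is what \cite[Lemma 7.9]{BKKO17} packages, so I would invoke it rather than reprove it. Once condition (1) holds, Theorem~\ref{thm:maxideals} gives both conclusions of Theorem~\ref{thm:mi}: part (2) of Theorem~\ref{thm:maxideals} yields part (1) of Theorem~\ref{thm:mi} (every maximal ideal of $C^*_r(G)$ is induced from a maximal ideal of $C^*(G_x)$), and part (3) of Theorem~\ref{thm:maxideals} yields part (2) of Theorem~\ref{thm:mi} (induction is a bijection onto the maximal co-induced ideals).

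The main obstacle I anticipate is purely bookkeeping: matching the notation and hypotheses of \cite[Lemma 7.9]{BKKO17} to the abstract Galois-connection framework — in particular, checking that the conditional expectation furnished there is exactly the one onto the subalgebra $C^*(G_x)$ we want, that it is unital (which requires both algebras to be unital, fine for discrete groups), and that the "co-induced" ideals of $C^*(G_x)$ in the sense of the Galois connection of Theorem~\ref{thm:maxideals} are the relevant objects. There is no real analytic difficulty beyond citing the boundary lemma; everything else is an immediate specialization. I would therefore keep the proof short: set up $B = C^*_r(G)$, $A = C^*(G_x)$, $P$ the expectation from \cite[Lemma 7.9]{BKKO17}, note that said lemma is exactly condition (1) of Theorem~\ref{thm:maxideals}, and conclude.
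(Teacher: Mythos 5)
Your proposal is correct, and it is in fact exactly the route the paper itself flags as available: the authors state that Theorem~\ref{thm:mi} is ``an immediate consequence of \cite[Lemma 7.9]{BKKO17} and Theorem~\ref{thm:maxideals}'', which is your argument verbatim --- take $B=C^*_r(G)$, $A=C^*(G_x)=C^*_r(G_x)$ (using amenability of $G_x$), $P=P_{G_x}$ the canonical conditional expectation, observe that the cited lemma is precisely condition (1) of Theorem~\ref{thm:maxideals}, and conclude via the implications (1)$\implies$(3)$\implies$(4) of that theorem (both algebras being unital). The paper, however, elects to write out ``a more direct proof'': using $G$-injectivity of $C(\partial_F G)$ it constructs a state $\sigma$ on $C^*_r(G)$ vanishing on a given maximal ideal $J$ and on all $\lambda_g$ with $g\notin G_x$ (a multiplicative-domain computation with a function separating $x$ from $gx$), and then shows directly that $J\subseteq\Ind_{G_x}(K)$ where $K$ is the ideal of elements of $C^*(G_x)$ annihilated by $\sigma$ under two-sided multiplication. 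That argument essentially reproves the content of \cite[Lemma 7.9]{BKKO17} in a self-contained way; your version buys brevity, the paper's buys independence from the exact form of the cited lemma.

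Two corrections to your write-up, neither fatal. First, the existence of the conditional expectation $P_{G_x}\colon C^*_r(G)\to C^*_r(G_x)$ is standard for an arbitrary subgroup (compress by the orthogonal projection onto $\ell^2(G_x)$); it is not the content of \cite[Lemma 7.9]{BKKO17}. The content of that lemma is exactly the properness statement, and that statement is special to stabilizers of points of $\partial_F G$. Second, your heuristic that $1\in\overline{P(J)}$ would force $J$ to be improper ``by the Schwarz estimate'' is not a valid argument for a general conditional expectation: an element $a\in J$ with $P(a)$ close to $1$ need not be close to an invertible element, and condition (1) of Theorem~\ref{thm:maxideals} genuinely fails for general inclusions. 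Since you explicitly defer to the cited lemma rather than to this sketch, the proof stands, but the sketch should not be mistaken for a proof of the lemma. (Also, ``nonzero ideal'' in your statement of the lemma should read ``proper ideal''.)
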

Since the isotropy groups $G_x$ are amenable, Theorem~\ref{thm:thm} has the following surprising consequence: it reduces the problem of understanding maximal ideals in reduced group C*-algebras of arbitrary groups to understanding ideals in group C*-algebras of \emph{amenable} groups.

Given a countable group $G$, Le Boudec and Matte Bon proved in \cite[Section 2.4]{LBMB18} that $\mathcal{A}_G:=\{G_x:x\in \partial_F G\}$ is the largest amenable uniformly recurrent subgroup of $G$, and used this fact for computing $\mathcal{A}_G$ for several examples of groups. Thus, Theorem \ref{thm:thm} can also be seen as the version for ideals of the fact shown by Breuillard, Kalantar, Kennedy and Ozawa in \cite[Theorem 4.1]{BKKO17} that any tracial state on $C^*_r(G)$ is supported on the largest amenable normal subgroup of $G$ . 

The Effros--Hahn conjecture states that every primitive ideal in the crossed product of a commutative C*-algebra by a locally compact group should be induced from a primitive ideal in the C*-algebra of some stabilizer subgroup. For the case of discrete amenable groups, the conjecture was confirmed by Sauvageot in \cite[Théorème 6.6]{S77}, and since then it has been extended to various other contexts (see \cite[Section 1]{DE17} for further references).

In Example \ref{ex:cs}, we show that a version of the Effros--Hahn conjecture in the setting of Theorem \ref{thm:thm} does not hold, by presenting an example of a group $G$ such that $C^*_r(G)$ has a primitive ideal which is not induced from $C^*(G_x)$ for any $x\in\partial_FG$. The construction uses wreath products.

Let $F\leq T$ be Thompson's groups. It is a long-standing open problem to determine whether or not $F$ is amenable (see \cite{CFP96} for an introduction to Thompson's groups). Haagerup and Olesen showed in \cite[Theorem 5.5]{HO17} that, if $F$ is amenable, then $C^*_r(T)$ is not simple. Furthermore, in \cite[Corollary 4.2]{LBMB18}, Le Boudec and Matte Bon showed the converse. In particular, if $F$ is not amenable, then $C^*_r(T)$ has a unique maximal ideal (namely, the zero ideal). By using Theorem~\ref{thm:thm} and a description due to Dudko and Medynets (\cite{DM14}) of extreme characters on Thompson's groups, we prove the following:
\begin{thm}[Theorem \ref{thm:th}]
\label{thm:C}
 The reduced C*-algebra of Thompson's group $T$ always has a unique maximal ideal.
\end{thm}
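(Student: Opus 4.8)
**Proof proposal for Theorem C (uniqueness of the maximal ideal of $C^*_r(T)$)**

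The plan is to apply Theorem~\ref{thm:thm} to $G = T$, which reduces the problem to understanding maximal ideals of $C^*(T_x)$ for $x \in \partial_F T$, together with which of those ideals are co-induced. First I would identify the isotropy groups $T_x$ arising from the Furstenberg boundary action of $T$. Since $\mathcal{A}_T = \{T_x : x \in \partial_F T\}$ is the largest amenable uniformly recurrent subgroup of $T$, I would invoke the computation of Le Boudec and Matte Bon (and the known structure of $T$ acting on the circle) to pin down the conjugacy class of $T_x$: the relevant stabilizers should be the subgroups fixing a point of the circle, which are isomorphic to a copy of Thompson's group $F$ (germs at a dyadic point give $F \times F$, but the rigid stabilizer structure identifies the URS as coming from point stabilizers). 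So the task becomes: determine the maximal ideals of $C^*(F)$ that are co-induced, and show exactly one of them yields a maximal ideal of $C^*_r(T)$ under induction.

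Next I would bring in the Dudko--Medynets classification of extreme characters on $T$ (and on $F$). By Theorem~\ref{thm:thm}(2), maximal ideals of $C^*_r(T)$ correspond bijectively to maximal co-induced ideals of $C^*(T_x)$. A maximal ideal of $C^*(T_x)$ corresponds to an irreducible representation of $T_x$, hence (in the amenable, hence type-I-on-the-relevant-part or at least character-accessible case) is governed by characters; the co-induced condition — that $\overline{P(J)}$ be proper for the relevant conditional expectation — should force the corresponding character on $T_x$ to extend compatibly to $T$, i.e. to be the restriction of a character on $T$. The Dudko--Medynets result says the extreme characters of $T$ are the regular character (giving the zero ideal, which is always maximal-or-everything) and characters factoring through finite-dimensional representations; since $T$ is simple, the only finite-dimensional representation is the trivial one, so the only non-regular extreme character of $T$ is the trivial character. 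Tracing this through the Galois connection, the only maximal co-induced ideal of $C^*(T_x)$ other than (possibly) one coming from the trivial representation is the one matching the regular representation — and I would argue these coincide or that exactly one survives as a maximal ideal of $C^*_r(T)$.

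More precisely, the endgame I envisage: the augmentation ideal of $C^*(T_x)$ (kernel of the trivial representation) induces up to a proper ideal of $C^*_r(T)$ precisely because $T$ has a nontrivial quotient-free structure only via the trivial representation, and this induced ideal is maximal and is the unique such by the bijection. The uniqueness is then a statement that $C^*(T_x) \cong C^*(F)$ (or the relevant germ group) has a \emph{unique} maximal co-induced ideal; since co-induced ideals of $C^*(G_x)$ correspond under the Galois connection to ideals that are "seen" from $T$, and characters of $T$ restrict to very few characters of $T_x$, uniqueness follows. The main obstacle will be the middle step: precisely identifying which maximal ideals of $C^*(T_x)$ are co-induced, i.e. understanding the conditional expectation $C^*_r(T) \to C^*(T_x)$ concretely enough — this requires the $[$BKKO, Lemma 7.9$]$ input and a careful argument that the co-induced condition selects exactly the characters of $T_x$ that come from $T$, after which the Dudko--Medynets description of $\mathrm{Char}(T)$ closes the argument. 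A secondary subtlety is handling the case split according to whether $F$ is amenable: if $F$ is amenable the zero ideal is not maximal (there are nontrivial ideals, consistent with Haagerup--Olesen), so "unique maximal ideal" must be read as the nonzero one; if $F$ is not amenable the zero ideal is the unique maximal ideal by Le Boudec--Matte Bon — and I would check that the Galois-connection count gives "one" uniformly in both cases.
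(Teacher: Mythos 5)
Your overall skeleton --- reduce via Theorem~\ref{thm:mi} to maximal co-induced ideals of the boundary stabilizer, invoke Dudko--Medynets, and split on whether $F$ is amenable --- matches the paper, but there are two genuine gaps in the middle that the paper's proof resolves differently. First, you misidentify the stabilizer. The point stabilizers for $T\acts\partial_F T$ (when $F$ is amenable) are the \emph{germ} stabilizers $T_x^0$, consisting of elements fixing a neighborhood of a point $x$ of the circle, and the proof deliberately takes $x$ \emph{non-dyadic}: then $T_x^0$ is an increasing union of copies of $[F,F]$ and hence \emph{simple} (Lemma~\ref{lem:sim}). It is not a copy of $F$. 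This matters: $F$ has abelianization $\mathbb{Z}^2$ and therefore many characters, so with your identification the uniqueness argument would not close; simplicity of $T_x^0$ is precisely what eliminates all characters except the trivial and regular ones.

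Second, your mechanism for bringing in Dudko--Medynets does not work as stated. You propose that the co-induced condition forces a character of $T_x$ to extend to a character of $T$, and then classify characters of $T$; neither step is justified (co-inducedness of an ideal says nothing about extending characters, and maximal ideals of $C^*(T_x^0)$ are not ``governed by irreducible representations'' in the way you use). The paper's bridge is different: given a maximal ideal $J$ of $C^*(T_x^0)$ with $I=\Ind_{T_x^0}(J)$, \emph{amenability} of $T_x^0$ guarantees a tracial state on the simple quotient $C^*(T_x^0)/J$; an extreme such trace pulls back to an extreme character of $T_x^0$; then Dudko--Medynets is applied \emph{to $T_x^0$ itself}, using that its action on $\mathbb{R}/\mathbb{Z}\setminus\{x\}$ is compressible (Lemma~\ref{lem:simcom}) together with simplicity, to conclude this character is the trivial one, whence $J$ is the augmentation ideal by maximality. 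So the missing ingredients in your outline are exactly the two lemmas about $T_x^0$ (simplicity and compressibility) and the trace-existence step via amenability; without them the ``endgame'' you sketch does not go through.
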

We also strengthen the result of Haagerup and Olesen (\cite[Theorem 5.5]{HO17}) by showing that, if $F$ is amenable, then $C^*_r(T)$ has infinitely many ideals (Theorem~\ref{thm:infinite-ideals}).

These results together yield the following characterizations of amenability of $F$ in terms of the ideal structure of $C^*_r(T)$:
\begin{cor}[{Corollary \ref{cor:is}, \cite{HO17}, \cite{LBMB18}, \cite{KS22}}]
    The following statements are equivalent:
    \begin{enumerate}[\upshape(1)]
        \item Thompson's group $F$ is amenable;
        \item Thompson's group $T$ is not $C^*$-simple;
        \item the unique maximal ideal of $C^*_r(T)$ is nonzero;
        \item the unique nonzero simple quotient of $C^*_r(T)$ is traceless;
        \item $C^*_r(T)$ has infinitely many ideals.
    \end{enumerate}
\end{cor}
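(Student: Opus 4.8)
The plan is to prove the five statements equivalent through the cycle $(1)\Rightarrow(5)\Rightarrow(2)\Leftrightarrow(3)\Rightarrow(1)$, together with the separate equivalence $(1)\Leftrightarrow(4)$, drawing each nontrivial implication from a result already available.

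For the cycle: $(1)\Rightarrow(5)$ is precisely Theorem~\ref{thm:infinite-ideals}. For $(5)\Rightarrow(2)$ I would observe that a unital C*-algebra with infinitely many ideals has, in particular, a nonzero proper ideal, hence is not simple, which means by definition that $T$ is not $C^*$-simple. For $(2)\Leftrightarrow(3)$ I would invoke Theorem~\ref{thm:C}: a unital C*-algebra fails to be simple exactly when it has a nonzero proper ideal, and, since $C^*_r(T)$ has a \emph{unique} maximal ideal $\mathfrak m$ which contains every proper ideal, this happens exactly when $\mathfrak m\neq 0$; conversely such an $\mathfrak m$ is itself a nonzero proper ideal. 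Finally $(3)\Rightarrow(1)$: if $\mathfrak m\neq 0$ then $C^*_r(T)$ is not simple, so $F$ is amenable by \cite[Corollary~4.2]{LBMB18}. (The classical implication $(1)\Rightarrow(2)$ of \cite[Theorem~5.5]{HO17} could be used to close the loop instead, and is in any case built into Theorem~\ref{thm:infinite-ideals}.)

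For the equivalence with (4): by Theorem~\ref{thm:C} the unique nonzero simple quotient of $C^*_r(T)$ is well defined, being $C^*_r(T)/\mathfrak m$ when $\mathfrak m\neq 0$ and $C^*_r(T)$ itself when $\mathfrak m= 0$. Since Thompson's group $T$ is simple and non-amenable, its amenable radical is trivial, so by \cite[Theorem~4.1]{BKKO17} (compare \cite{KS22}) the canonical tracial state $\tau$ is the only tracial state on $C^*_r(T)$, and $\tau$ is faithful. If $F$ is amenable then $\mathfrak m\neq 0$ by \cite[Theorem~5.5]{HO17}, and any tracial state on $C^*_r(T)/\mathfrak m$ would lift to a tracial state on $C^*_r(T)$ vanishing on $\mathfrak m$, hence equal $\tau$, contradicting faithfulness of $\tau$; thus the simple quotient is traceless, giving $(1)\Rightarrow(4)$. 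Conversely, if $F$ is not amenable then $\mathfrak m= 0$ by \cite[Corollary~4.2]{LBMB18}, so the unique nonzero simple quotient is $C^*_r(T)$, which carries $\tau$ and is therefore not traceless; this gives $(4)\Rightarrow(1)$.

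The main obstacle is not in the logical skeleton, which is short once Theorems~\ref{thm:C} and~\ref{thm:infinite-ideals} and the quoted results are in place, but in condition~(4): one must handle the degenerate case $\mathfrak m=0$ correctly when interpreting ``the unique nonzero simple quotient,'' and must pin down the tracial state space of $C^*_r(T)$ precisely enough to conclude that no trace survives in the simple quotient when $\mathfrak m\neq 0$. Once uniqueness and faithfulness of the canonical trace are recorded, the rest is immediate.
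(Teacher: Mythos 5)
Your proposal is correct, and its logical skeleton coincides with what the paper intends: conditions (1), (2), (3), (5) are linked exactly as you describe, using Theorem~\ref{thm:infinite-ideals} for $(1)\Rightarrow(5)$, the trivial observation for $(5)\Rightarrow(2)$, Theorem~\ref{thm:th} (unique maximal ideal, which necessarily contains every proper ideal of the unital algebra $C^*_r(T)$) for $(2)\Leftrightarrow(3)$, and \cite[Corollary~4.2]{LBMB18} for $(3)\Rightarrow(1)$. Where you genuinely diverge is condition (4). The paper handles it via Remark~\ref{rem:tf}: it identifies the unique nonzero simple quotient, in the case $F$ amenable, with $C^*_{\lambda_{T/F}}(T)$ and imports from \cite[Theorem~6.1]{KS22} that this algebra is simple and has no tracial states. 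You instead never identify the quotient: you note that $T$ is a simple non-amenable group, so its amenable radical is trivial, whence by \cite[Theorem~4.1]{BKKO17} the canonical trace is the \emph{only} tracial state on $C^*_r(T)$ and is faithful; a trace on $C^*_r(T)/\mathfrak{m}$ with $\mathfrak{m}\neq 0$ would pull back to a trace vanishing on $\mathfrak{m}$, contradicting faithfulness. This is a clean and correct alternative that trades the concrete description of the quotient (which the paper's Remark~\ref{rem:tf} also wants for its own sake) for the softer unique-trace machinery of \cite{BKKO17}; your explicit treatment of the degenerate case $\mathfrak{m}=0$ in interpreting ``the unique nonzero simple quotient'' is also the right way to make $(4)\Leftrightarrow(1)$ airtight.
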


\subsection*{Acknowledgement}
We thank Narutaka Ozawa for allowing us to include Example \ref{ex:oz}.

\section{Induction of ideals via conditional expectations}
Let $A$ be a C*-subalgebra of a C*-algebra $B$, and suppose we have a conditional expectation $P\colon B\to A$, which will be fixed throughout this section.
We let $\Iz(A)$ and $\Iz(B)$ denote the complete lattices of (closed and two-sided) ideals of $A$ and $B$, respectively. 
The meet and join of a family of ideals $(I_s)_{s\in S}$ are defined as 
\[
\bigwedge_{s\in S} I_s\coloneqq \bigcap_{s\in S}I_s,
\]
and 
\[
\bigvee_{s\in S}I_s\coloneqq \overline{\spn}(\{a : a\in I_s, s\in S\}),
\]
respectively.

The following notion of induction is obtained by specialization from the usual notion of induction of an ideal by an imprimitivity bimodule (see, for instance, \cite[Equation (2.5.3)]{CELY17}).

\begin{definition}
\label{def:induced}
 Let $\Ind_P\colon \Iz(A)\to\Iz(B)$ be given by
\[
\Ind_P(I)\coloneqq \{b\in B :P(xby)\in I \text{ for all } x,y\in B\},
\]
for all $I\in\Iz(A)$. We call $\Ind_P(I)$ the ideal of $B$ \emph{induced} from $I$ via $P$.    
\end{definition}

We will now explain the term ``induced ideal''. Given a nondegenerate representation $\pi\colon A\to \Bz(\cH)$, there is a Hilbert space $B\otimes_A\cH$ endowed with a linear map with dense image from the algebraic tensor product $B\odot_A\cH$ into $B\otimes_A\cH$, and such that the inner product of $B\otimes_A\cH$ satisfies
\[
\inn{x\otimes \xi}{y\otimes \eta}=\inn{\pi(P(y^*x))\xi}{\eta}_\cH,
\]
for all $x,y\in B$ and $\xi,\eta\in\cH$. 
Furthermore, there is a representation $\Ind_P(\pi)\colon B\to\Bz(B\otimes_A\cH)$ given by $\Ind_P(\pi)(b)(x\otimes \xi)=(b x)\otimes \xi$, for all $b,x\in B$ and $\xi\in \cH$, see \cite[Theorem 1.8]{R74}. The representation $\Ind_P(\pi)$ is called the \emph{induced representation} of $\pi$ via $P$. 

The next observation justifies the terminology in Definition~\ref{def:induced}.
\begin{proposition}
Suppose $I$ is an ideal of $A$, and $\pi\colon A\to \Bz(\cH)$ is a nondegenerate representation such that $I=\ker(\pi)$. Then $\Ind_P(I)=\ker(\Ind_P(\pi))$. 
\end{proposition}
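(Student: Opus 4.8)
The plan is to prove the two inclusions $\Ind_P(I) \subseteq \ker(\Ind_P(\pi))$ and $\ker(\Ind_P(\pi)) \subseteq \Ind_P(I)$ by directly manipulating the inner product formula on $B \otimes_A \cH$. Throughout, write $\rho = \Ind_P(\pi)$, so that $\rho(b)(x \otimes \xi) = (bx) \otimes \xi$ and $\langle x \otimes \xi, y \otimes \eta \rangle = \langle \pi(P(y^*x))\xi, \eta\rangle_\cH$. The key elementary computation is that for $b \in B$ and elementary tensors $x \otimes \xi$, $y \otimes \eta$,
\[
\langle \rho(b)(x\otimes\xi), y\otimes\eta\rangle = \langle (bx)\otimes\xi, y\otimes\eta\rangle = \langle \pi(P(y^*bx))\xi, \eta\rangle_\cH.
\]
Since elementary tensors span a dense subspace of $B \otimes_A \cH$, we have $\rho(b) = 0$ if and only if $\pi(P(y^*bx)) = 0$ for all $x, y \in B$.

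For the first inclusion, suppose $b \in \Ind_P(I)$, i.e. $P(x b y) \in I = \ker(\pi)$ for all $x, y \in B$. Then for all $x, y \in B$ we have $P(y^* b x) \in I$, hence $\pi(P(y^* b x)) = 0$, hence by the displayed computation $\langle \rho(b)(x\otimes\xi), y\otimes\eta\rangle = 0$ for all elementary tensors; by density $\rho(b) = 0$, so $b \in \ker(\rho)$. For the reverse inclusion, suppose $b \in \ker(\rho)$. Then $\pi(P(y^* b x)) = 0$ for all $x, y \in B$, i.e. $P(y^* b x) \in \ker(\pi) = I$ for all $x, y \in B$. As $x$ and $y^*$ range over all of $B$ independently, this says precisely that $P(x' b y') \in I$ for all $x', y' \in B$, i.e. $b \in \Ind_P(I)$.

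The only mild subtlety — and the step I would be most careful about — is the passage from ``$\langle \rho(b)\zeta, \zeta'\rangle = 0$ for all $\zeta, \zeta'$ elementary tensors'' to ``$\rho(b) = 0$'': this uses that the image of $B \odot_A \cH$ is dense in $B \otimes_A \cH$ together with continuity of $\rho(b)$ and of the inner product, so that $\langle \rho(b)\zeta, \zeta'\rangle = 0$ extends to all $\zeta, \zeta' \in B \otimes_A \cH$, whence $\rho(b)\zeta$ is orthogonal to a dense set and therefore zero for every $\zeta$. One should also note that nondegeneracy of $\pi$ is what guarantees $B \otimes_A \cH$ is the correct object (rather than needing a separate argument), but it plays no further role in the proof. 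Everything else is the bookkeeping of letting $x, y$ range over $B$, which is symmetric in a way that matches Definition~\ref{def:induced} exactly.
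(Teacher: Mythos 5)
Your proof is correct and follows essentially the same route as the paper's: both reduce $\Ind_P(\pi)(b)=0$ to the vanishing of $\langle \pi(P(y^*bx))\xi,\eta\rangle$ over elementary tensors via the inner-product formula and density, and then identify this with the defining condition $P(xby)\in I$. The extra care you take with the density/continuity step and with $y^*$ versus $y$ ranging over $B$ is exactly the (implicit) content of the paper's chain of equivalences.
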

\begin{proof}
 Given $b\in B$, we have the following:
 \begin{align*}
b\in \ker(\Ind_P(\pi))&\iff (b y)\otimes\xi =0 \text{ for all }y\in B,\xi\in \cH\\
&\iff \langle (b y)\otimes\xi, x\otimes\eta\rangle =0 \text{ for all }x,y\in B,\xi,\eta\in \cH\\
&\iff \langle \pi(P(xby))\xi, \eta\rangle_\cH =0 \text{ for all }x,y\in B,\xi,\eta\in \cH\\
&\iff \pi(P(xby))=0\text{ for all }x,y\in B\\
&\iff P(xby)\in I \text{ for all }x,y\in B,
 \end{align*}
 as wanted.
\end{proof}

\begin{remark}
It follows from the polarization identity
\[
P(x^*by)=\frac{1}{4}\sum_{j=0}^3i^jP((x+i^jy)^*b(x+i^jy)),
\]
that, if $I$ is an ideal in $A$, then
\[
\Ind_P(I)= \{b\in B : P(x^*bx)\in I \text{ for all } x\in B\}.
\]
\end{remark}   

Our next aim is to show that $\Ind_P$ is part of a Galois connection, in the sense of \cite[Definition~7.23]{DP02}.

\begin{definition}
    Let $\Coind_P\colon \Iz(B)\to\Iz(A)$ be given by 
    \[
    \Coind_P(J)\coloneqq \overline{P(J)},
    \]
    for all $J\in\Iz(B)$. We call $\Coind_P(J)$ the ideal of $A$ \emph{co-induced} from $J$ via $P$.  
\end{definition}

Let $\Iz_P(A)\coloneqq \Coind_P(\Iz(B))$ and $\Iz_P(B)\coloneqq \Ind_P(\Iz(A))$. We shall call the ideals in $\Iz_P(A)$ \emph{co-induced} and the ideals in $\Iz_P(B)$ \emph{induced}.

In some cases, $P(J)$ is known to be automatically closed (see \cite[Proposition 3.22]{BFPR} and \cite[Lemma 2.1]{CN24}). In general, this is false, as the following example, due to N. Ozawa (personal communication), shows.

\begin{example}[Ozawa]\label{ex:oz}
Take a sequence $(A_i)_{i\in \Nz}$ of disjoint finite nonempty subsets of $\Nz$ such that $|A_i|\to+\infty$ and $\Nz=\bigsqcup_{i\in \Nz}A_i$.

Let $P\colon \ell^\infty(\Nz)\to\ell^\infty(\Nz)$ be given by 
\[P(f)(x):=\sum_{y\in A_i}\frac{f(y)}{|A_i|},\]
for $f\in \ell^\infty(\Nz)$, $i\in \Nz$, and $x\in A_i$. Then $P$ is a conditional expectation onto $D:=\{f\in \ell^\infty(\Nz):\text{$\forall i$, $f|_{A_i}$ is constant}\}$.

Fix a sequence $(a_i)_{i\in \Nz}\subset \Nz$ such that, for each $i\in\Nz$, $a_i\in A_i$. Let $J:=\ell^\infty(\bigsqcup_{i\in\Nz}\{a_i\})\unlhd\ell^\infty(\Nz)$. 

Given $f\in J$ and $i\in\Nz$, we have that 
\[P(f)(a_i)=\frac{f(a_i)}{|A_i|}.\]

Therefore, $\overline{P(J)}=\{g\in D:g(a_i)\to 0\}$. Let $g\in D$ be such that $g(a_i)=|A_i|^{-1/2}$, for all $i\in\Nz$. Then $g\in\overline{P(J)}\setminus P(J)$. 
\end{example}

\begin{remark} In an algebraic setting similar to ours, the ``admissible'' ideals of \cite[Definition 4.6]{DE17} correspond to what we call ``co-induced'' ideals (see \cite[Proposition 4.12]{DE17}).
\end{remark}

Let us record the following observations, which are easy consequences of the definitions.
\begin{lemma}
\label{lem:closures}
We have that
\begin{enumerate}[\upshape(1)]
\item  $\Ind_P$ and $\Coind_P$ are order-preserving;
    \item $\Coind_P(\Ind_P(I))\subseteq I$ for all $I\in\Iz(A)$;
    \item $J\subseteq \Ind_P(\Coind_P(J))$ for all $J\in\Iz(B)$.
\end{enumerate}
\end{lemma}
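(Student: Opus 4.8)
The plan is to verify each of the three claims directly from the definitions, using only elementary properties of conditional expectations (positivity, the module property $P(axb) = aP(x)b$ for $a,b \in A$, and contractivity) together with the ideal axioms.

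For claim (1), that $\Ind_P$ and $\Coind_P$ are order-preserving: if $I_1 \subseteq I_2$ in $\Iz(A)$, then any $b$ with $P(xby) \in I_1$ for all $x,y \in B$ automatically satisfies $P(xby) \in I_2$, so $\Ind_P(I_1) \subseteq \Ind_P(I_2)$. For $\Coind_P$, if $J_1 \subseteq J_2$ in $\Iz(B)$, then $P(J_1) \subseteq P(J_2)$, hence $\overline{P(J_1)} \subseteq \overline{P(J_2)}$. (One should first note that $\Ind_P(I)$ really is a closed two-sided ideal of $B$ — closedness is clear since it is an intersection of preimages of the closed set $I$ under the continuous maps $b \mapsto P(xby)$, and the two-sidedness follows because $B \cdot B \cdot B \subseteq B$; similarly $\overline{P(J)}$ is a closed ideal of $A$ because $P$ is an $A$-bimodule map and $J$ is an ideal of $B$, so $a P(j) a' = P(aja') \in P(J)$ for $a,a' \in A$, $j \in J$. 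If these facts have not already been established earlier, I would include a sentence on them.)

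For claim (2), $\Coind_P(\Ind_P(I)) \subseteq I$: let $b \in \Ind_P(I)$, so in particular (taking $x = y = $ an approximate unit, and using that $B$ is unital here, or more carefully an approximate identity argument) $P(b) \in I$; more directly, for $b \in \Ind_P(I)$ we have $P(xby) \in I$ for all $x, y \in B$, and specializing $x = y = 1$ gives $P(b) \in I$. Thus $P(\Ind_P(I)) \subseteq I$, and since $I$ is closed, $\overline{P(\Ind_P(I))} \subseteq I$, which is exactly $\Coind_P(\Ind_P(I)) \subseteq I$. (If unitality is not assumed in this lemma, one replaces $1$ by an approximate identity $(e_\lambda)$ of $B$: $P(e_\lambda b e_\lambda) \to P(b)$ in norm, and each term lies in $I$, so $P(b) \in \overline{I} = I$.)

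For claim (3), $J \subseteq \Ind_P(\Coind_P(J))$: let $b \in J$. For any $x, y \in B$, since $J$ is a two-sided ideal we have $xby \in J$, hence $P(xby) \in P(J) \subseteq \overline{P(J)} = \Coind_P(J)$. Since this holds for all $x, y \in B$, the definition of $\Ind_P$ gives $b \in \Ind_P(\Coind_P(J))$. I do not expect any genuine obstacle here: the lemma is a formal bookkeeping step packaging the standard adjunction-type inequalities that make $(\Ind_P, \Coind_P)$ a Galois connection in the subsequent development. The only points requiring a modicum of care are confirming well-definedness (that the two operations land in $\Iz(B)$ and $\Iz(A)$ respectively) and, if the ambient algebras are not assumed unital at this stage, handling the evaluation at $x=y=1$ via an approximate identity.
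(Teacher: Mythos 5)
Your proof is correct and matches the paper's intent exactly: the paper records this lemma without proof as an "easy consequence of the definitions," and your direct verification (including the well-definedness remarks and the approximate-identity substitute for $x=y=1$ in the non-unital case) is precisely the argument being left to the reader.
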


The next result says that the pair $(\Coind_P,\Ind_P)$ forms a Galois connection between $\Iz(B)$ and $\Iz(A)$. 
The result is an immediate consequence of Lemma~\ref{lem:closures} (see also \cite[Lemma 7.26]{DP02}). 
\begin{proposition}\label{prop:gc}
For all $I\in\Iz(A)$ and $J\in\Iz(B)$, we have
\begin{equation*}
    J\subseteq \Ind_P(I) \quad \text{ if and only if } \quad \Coind_P(J)\subseteq I.
\end{equation*}
\end{proposition}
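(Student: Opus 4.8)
The plan is to derive Proposition~\ref{prop:gc} directly from Lemma~\ref{lem:closures}, which already packages the three facts that characterize a Galois connection: both maps are order-preserving, and the two composites satisfy the unit/counit inequalities $\Coind_P(\Ind_P(I))\subseteq I$ and $J\subseteq\Ind_P(\Coind_P(J))$. This is the standard abstract argument, so there is no real obstacle; the only care needed is to chain the monotonicity with the correct composite inequality in each direction.

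For the forward implication, I would start from $J\subseteq\Ind_P(I)$, apply the order-preserving map $\Coind_P$ to both sides to get $\Coind_P(J)\subseteq\Coind_P(\Ind_P(I))$, and then invoke Lemma~\ref{lem:closures}(2) to conclude $\Coind_P(\Ind_P(I))\subseteq I$, hence $\Coind_P(J)\subseteq I$. For the converse, I would start from $\Coind_P(J)\subseteq I$, apply $\Ind_P$ (again order-preserving) to obtain $\Ind_P(\Coind_P(J))\subseteq\Ind_P(I)$, and then combine with Lemma~\ref{lem:closures}(3), which gives $J\subseteq\Ind_P(\Coind_P(J))$, to deduce $J\subseteq\Ind_P(I)$.

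Since both directions are one-line applications of monotonicity followed by the appropriate inclusion from Lemma~\ref{lem:closures}, the proof is essentially immediate, and one could also simply cite \cite[Lemma 7.26]{DP02} as the excerpt already suggests. I expect no step to be a genuine obstacle; if anything, the subtlety is purely bookkeeping — making sure that $\Coind_P$ is applied in the forward direction and $\Ind_P$ in the backward direction, matching the variance so that the composite inequalities point the right way.

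\begin{proof}
Suppose first that $J\subseteq\Ind_P(I)$. Applying the order-preserving map $\Coind_P$ (Lemma~\ref{lem:closures}(1)) yields $\Coind_P(J)\subseteq\Coind_P(\Ind_P(I))$, and by Lemma~\ref{lem:closures}(2) we have $\Coind_P(\Ind_P(I))\subseteq I$; hence $\Coind_P(J)\subseteq I$. Conversely, suppose $\Coind_P(J)\subseteq I$. Applying the order-preserving map $\Ind_P$ yields $\Ind_P(\Coind_P(J))\subseteq\Ind_P(I)$, and by Lemma~\ref{lem:closures}(3) we have $J\subseteq\Ind_P(\Coind_P(J))$; hence $J\subseteq\Ind_P(I)$.
\end{proof}
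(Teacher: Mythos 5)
Your proof is correct and follows exactly the route the paper intends: the paper states that Proposition~\ref{prop:gc} is an immediate consequence of Lemma~\ref{lem:closures} (citing \cite[Lemma 7.26]{DP02}), and your argument is precisely that standard deduction, spelled out. Both directions are handled correctly, with the monotonicity and the composite inequalities applied with the right variance.
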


That $(\Coind_P,\Ind_P)$ is a Galois connection between $\Iz(B)$ and $\Iz(A)$ has the following consequences (see also \cite[Proposition~2.9]{KM20}, where similar observations have been made in a related setting).

\begin{proposition}
\label{prop:Galois}
The maps $\Coind_P\colon\Iz(B)\to\Iz(A)$ and $\Ind_P\colon\Iz(A)\to\Iz(B)$ satisfy the following:
    \begin{enumerate}[\upshape(1)]
        \item $\Ind_P(I)=\Ind_P(\Coind_P(\Ind_P(I)))$ for all $I\in\Iz(A)$; \label{i:1}
        \item $\Coind_P(J)=\Coind_P(\Ind_P(\Coind_P(J)))$ for all $J\in\Iz(B)$; \label{i:2}
        \item $\Ind_P$ preserves meets and $\Coind_P$ preserves joins; \label{i:3}
        \item $\Coind_P$ and $\Ind_P$ restrict to mutually inverse isomorphisms between the complete lattices $\Iz_P(A)$ and $\Iz_P(B)$; \label{i:4}
        \item for all $I\in\Iz(A)$, $\Coind_P(\Ind_P(I))$ is the largest co-induced ideal that is contained in $I$; \label{i:5}
        \item for all $J\in\Iz(B)$, $\Ind_P(\Coind_P(J))$ is the smallest induced ideal that contains $J$; \label{i:6}
	\item given a proper ideal $I\in\Iz(A)$, we have that $\Ind_P(I)\in\Iz(B)$ is proper. \label{i:7}
    \end{enumerate}
\end{proposition}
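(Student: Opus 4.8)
### Proof proposal

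The plan is to establish the seven assertions by exploiting the abstract machinery of Galois connections, so that several of them become formal consequences, and then to handle only the genuinely analytic point, item~\eqref{i:7}, by hand. First I would observe that Proposition~\ref{prop:gc} already identifies $(\Coind_P, \Ind_P)$ as a Galois connection between the complete lattices $\Iz(B)$ and $\Iz(A)$ (with $\Coind_P$ the lower adjoint and $\Ind_P$ the upper adjoint). From the standard theory of Galois connections one gets, purely formally: the round-trip identities \eqref{i:1} and \eqref{i:2}, which say that applying either composite three times equals applying it once; the fact in \eqref{i:3} that the upper adjoint $\Ind_P$ preserves arbitrary meets while the lower adjoint $\Coind_P$ preserves arbitrary joins; and \eqref{i:4}, that the two maps restrict to mutually inverse order isomorphisms between the images $\Iz_P(A)$ and $\Iz_P(B)$, which are therefore themselves complete lattices. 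For \eqref{i:4} I would note that $\Iz_P(A)$ is closed under the meet inherited from $\Iz(A)$ (and $\Iz_P(B)$ under joins), but it is cleanest just to cite \cite[Lemma 7.26]{DP02} or to run the short argument: on $\Iz_P(A)$ the composite $\Coind_P\circ\Ind_P$ is the identity by \eqref{i:2}, and dually on $\Iz_P(B)$, so the restrictions invert each other. Items \eqref{i:5} and \eqref{i:6} are the usual ``closure operator'' interpretation: $\Coind_P\circ\Ind_P$ is an interior operator on $\Iz(A)$ whose fixed points are exactly the co-induced ideals, so $\Coind_P(\Ind_P(I))\subseteq I$ is co-induced and dominates every co-induced $K\subseteq I$ — indeed if $K=\Coind_P(\Ind_P(K))\subseteq I$ then applying the order-preserving $\Coind_P\circ\Ind_P$ and using idempotence gives $K\subseteq \Coind_P(\Ind_P(I))$; dually for \eqref{i:6} with $\Ind_P\circ\Coind_P$, a closure operator whose fixed points are the induced ideals.

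The one assertion that is not formal is \eqref{i:7}: if $I\in\Iz(A)$ is proper, then $\Ind_P(I)\in\Iz(B)$ is proper. Here I would argue contrapositively. Suppose $\Ind_P(I)=B$; in particular $1\in\Ind_P(I)$ (this is where unitality of $B$ enters — and the hypothesis of the proposition, inherited from the ambient setup in this section, should be that $A$ and $B$ are unital with $1_A=1_B$, which is consistent with how the theorem that uses it is stated). By the definition of $\Ind_P$, $1\in\Ind_P(I)$ means $P(x\cdot 1\cdot y)=P(xy)\in I$ for all $x,y\in B$. Taking $x=y=1$ gives $P(1)=1\in I$, forcing $I=A$. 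Hence if $I$ is proper, so is $\Ind_P(I)$. (Equivalently, one can phrase this via $\Coind_P$: $1_A = P(1_B) \in \Coind_P(\Ind_P(I))$ would follow from $\Ind_P(I)=B$, but $\Coind_P(\Ind_P(I))\subseteq I$ by Lemma~\ref{lem:closures}(2), contradicting properness of $I$.) I expect this to be the only step requiring care, and the care is simply tracking the unitality hypothesis — note that without it the statement can fail, since $\Ind_P(I)$ could be all of a non-unital $B$ even for proper $I$.

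A couple of organizational remarks on how I would present the write-up: I would do \eqref{i:1}--\eqref{i:4} in one block citing Lemma~\ref{lem:closures} and the elementary Galois-connection identities (deriving \eqref{i:1}: from Lemma~\ref{lem:closures}(3), $\Coind_P(\Ind_P(I))\subseteq I$ after... wait, rather: apply $\Ind_P$, order-preserving, to $\Coind_P(\Ind_P(I))\subseteq I$ to get $\Ind_P(\Coind_P(\Ind_P(I)))\subseteq \Ind_P(I)$; for the reverse, apply Lemma~\ref{lem:closures}(3) with $J=\Ind_P(I)$ to get $\Ind_P(I)\subseteq \Ind_P(\Coind_P(\Ind_P(I)))$; similarly for \eqref{i:2}), then \eqref{i:5}--\eqref{i:6} as the closure/interior operator statements, and finally \eqref{i:7} as the short unitality argument above. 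The main conceptual content is that everything except \eqref{i:7} is ``abstract nonsense'' once Proposition~\ref{prop:gc} is in hand, so the proof is short; the main obstacle, such as it is, is making sure the unitality hypothesis is invoked correctly in \eqref{i:7} and that nothing else silently uses it.
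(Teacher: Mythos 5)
Items \eqref{i:1}--\eqref{i:6} of your proposal are fine and follow essentially the same route as the paper, which simply cites the standard Galois-connection facts from \cite[Lemma 7.26, Proposition 7.31]{DP02} together with Lemma~\ref{lem:closures} and Proposition~\ref{prop:gc}; your explicit derivations of the round-trip identities and the closure/interior-operator descriptions are correct fillings-in of those citations.

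Your treatment of \eqref{i:7}, however, contains a genuine error of scope. You prove the statement only under an added hypothesis that $B$ is unital (you need $1_B$ to take $x=y=1$, and your alternative phrasing uses $P(1_B)$), and you assert that ``without it the statement can fail.'' That assertion is false, and the extra hypothesis is not available where the paper uses \eqref{i:7}: in the implication \ref{i:thm_2}$\implies$\ref{i:thm_3} of Theorem~\ref{thm:maxideals} only $A$ is assumed unital. The correct argument, which is what the paper means by ``\eqref{i:7} follows from Proposition~\ref{prop:gc},'' needs no units at all: a conditional expectation satisfies $P|_A=\mathrm{id}_A$, so $\Coind_P(B)=\overline{P(B)}=A$; hence if $\Ind_P(I)=B$, then applying Proposition~\ref{prop:gc} with $J=B$ gives $A=\Coind_P(B)\subseteq I$, contradicting properness of $I$. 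You should replace your contrapositive-via-unitality argument with this one-line use of the Galois connection and surjectivity of $P$, and delete the claim that unitality is essential.
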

\begin{proof}
\ref{i:1} and \ref{i:2} follow from \cite[Lemma~7.26]{DP02}.

\ref{i:3} is the content of \cite[Proposition~7.31]{DP02}.

\ref{i:4} is an immediate consequence of \ref{i:1}-\ref{i:3}.

\ref{i:5} and \ref{i:6} are straightforward consequences of Lemma \ref{lem:closures} and Proposition \ref{prop:gc}.

\ref{i:7} follows from Proposition \ref{prop:gc}.
\end{proof}

\subsection*{Maximal ideals}
Let $\Max(A)$ and $\Max(B)$ denote the maximal (proper) ideals of $A$ and $B$, respectively. 
Let $\Max_P(A)$ denote the collection of maximal co-induced (proper) ideals of $A$ via $P\colon B \rightarrow A$.

\begin{lemma}    
\label{lem:IinM}
If $A$ is unital and $I$ is a proper co-induced ideal of $A$, then there exists $M\in\Max_P(A)$ such that $I\subseteq M$.
\end{lemma}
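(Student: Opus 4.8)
The plan is to apply Zorn's lemma to the poset of proper co-induced ideals containing $I$, the key point being that the join of an increasing chain of such ideals is again co-induced, and that co-induced ideals are proper precisely when they are proper as ideals of the unital algebra $A$. First I would fix $I \in \Iz_P(A)$ proper and consider the set $\cS := \{ K \in \Iz_P(A) : I \subseteq K \subsetneq A\}$, which is nonempty since $I \in \cS$. Partially order $\cS$ by inclusion. To run Zorn's lemma I need every chain $(K_s)_{s \in S}$ in $\cS$ to have an upper bound in $\cS$: the natural candidate is the join $K := \bigvee_{s} K_s$ taken in $\Iz(B)$... no — taken in $\Iz(A)$. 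Here I would use Proposition~\ref{prop:Galois}\ref{i:3}, that $\Coind_P$ preserves joins, together with the fact (Proposition~\ref{prop:Galois}\ref{i:4}) that $\Iz_P(A)$ is a complete lattice; concretely, writing $K_s = \Coind_P(J_s)$ for ideals $J_s \in \Iz(B)$, one has $\bigvee_s \Coind_P(J_s) = \Coind_P\bigl(\bigvee_s J_s\bigr)$, so the join of the chain is again co-induced.

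The one genuine thing to check is that this join $K$ is still \emph{proper}, i.e. $K \neq A$. Here is where unitality of $A$ enters: if $K = A$, then $1 \in K = \overline{\spn}\{a : a \in K_s,\ s \in S\}$, so there are finitely many indices $s_1, \dots, s_n \in S$ and elements $a_i \in K_{s_i}$ with $\|1 - \sum_i a_i\| < 1$. Since the chain is totally ordered, all the $K_{s_i}$ lie in the largest one, say $K_{s_0}$, so $\sum_i a_i \in K_{s_0}$ and $\|1 - \sum_i a_i\| < 1$ forces $\sum_i a_i$ to be invertible in $A$, whence $K_{s_0} = A$, contradicting $K_{s_0} \in \cS$. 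Thus $K \subsetneq A$, so $K \in \cS$ is an upper bound for the chain.

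By Zorn's lemma, $\cS$ has a maximal element $M$. It remains to note that $M \in \Max_P(A)$, i.e. that $M$ is maximal not merely among co-induced ideals but that it is a maximal co-induced proper ideal in the sense intended: this is exactly the definition — $\Max_P(A)$ consists of the proper co-induced ideals that are maximal among proper co-induced ideals — and maximality of $M$ in $\cS$ combined with the fact that any proper co-induced ideal properly containing $M$ would also contain $I$ and hence lie in $\cS$ gives the claim. Finally $I \subseteq M$ by construction. I do not expect any real obstacle here: the only subtlety is remembering to take the join inside $\Iz(A)$ and to invoke preservation of joins by $\Coind_P$ to stay inside the co-induced ideals, and the properness argument is the standard unital-algebra argument. (Note that without unitality the statement can fail, which is why the hypothesis is present.)
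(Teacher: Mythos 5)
Your proof is correct and is essentially the paper's argument, just written out in full: the paper also applies Zorn's lemma, using unitality of $A$ to see that the join of a chain of proper ideals is proper and Proposition~\ref{prop:Galois}\ref{i:3} (preservation of joins by $\Coind_P$) to see that the join of a chain of co-induced ideals is co-induced. The extra details you supply (the $\|1-\sum_i a_i\|<1$ properness argument and the check that a maximal element of your poset is genuinely maximal among all proper co-induced ideals) are exactly the steps the paper leaves implicit.
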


\begin{proof}
Since $A$ is unital, the join of a chain of proper ideals of $A$ is a proper ideal. The result then follows from Zorn's lemma and the fact that, by part \ref{i:3} of Proposition \ref{prop:Galois}, $\Coind_P$ preserves joins.
\end{proof}

\begin{theorem}
\label{thm:maxideals}
Let $P\colon B\to A$ be a conditional expectation and consider the following conditions:
\begin{enumerate}[\upshape(1)]
    \item for every proper ideal $J\in\Iz(B)$, the ideal $\Coind_P(J)\in\Iz_P(A)$ is proper; \label{i:thm_1}
    \item for every proper ideal $J\in\Iz(B)$, there exists a proper ideal $I\in\Iz(A)$ such that $J\subseteq\Ind_P(I)$; \label{i:thm_2}
    \item for every $J\in \Max(B)$, there exists $I\in\Max(A)$ such that $J=\Ind_P(I)$; \label{i:thm_3}
    \item the map $\Ind_P$ defines a bijection from $\Max_P(A)$ to $\Max(B)$. \label{i:thm_4}
\end{enumerate}
Then \ref{i:thm_1}$\iff$\ref{i:thm_2}. If $A$ is unital, then \ref{i:thm_2}$\implies$\ref{i:thm_3}$\implies$\ref{i:thm_4}. If $B$ is unital, then \ref{i:thm_4}$\implies$\ref{i:thm_1}.
\end{theorem}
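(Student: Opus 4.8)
The plan is to prove the chain of implications in order, relying on the Galois connection machinery already established.

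For \ref{i:thm_1}$\iff$\ref{i:thm_2}: assuming \ref{i:thm_1}, take a proper ideal $J\in\Iz(B)$ and set $I\coloneqq\Coind_P(J)$, which is proper by hypothesis; then $J\subseteq\Ind_P(\Coind_P(J))=\Ind_P(I)$ by Lemma~\ref{lem:closures}(3), giving \ref{i:thm_2}. Conversely, assuming \ref{i:thm_2}, given a proper $J$ choose a proper $I$ with $J\subseteq\Ind_P(I)$; then by Proposition~\ref{prop:gc}, $\Coind_P(J)\subseteq I$, so $\Coind_P(J)$ is proper, which is \ref{i:thm_1}.

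For \ref{i:thm_2}$\implies$\ref{i:thm_3} (assuming $A$ unital): let $J\in\Max(B)$. By \ref{i:thm_2} there is a proper $I$ with $J\subseteq\Ind_P(I)$; since $\Ind_P(I)$ is proper (Proposition~\ref{prop:Galois}\ref{i:7}) and $J$ is maximal, $J=\Ind_P(I)$. Now $J=\Ind_P(\Coind_P(\Ind_P(I)))=\Ind_P(\Coind_P(J))$ by Proposition~\ref{prop:Galois}\ref{i:1}, so replacing $I$ by $I'\coloneqq\Coind_P(J)$ we may assume $I=\Coind_P(J)$ is co-induced and proper (it is proper since $\Ind_P(I')=J$ is proper, using Proposition~\ref{prop:gc} applied to $I'\subseteq I'$, or simply because $J$ proper forces $\Coind_P(J)$ proper as $\Coind_P(A)=\overline{P(A)}=A$ would otherwise contradict... more directly: if $I'=A$ then $\Ind_P(A)=B\neq J$). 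By Lemma~\ref{lem:IinM} there is $M\in\Max_P(A)$ with $I'\subseteq M$; since $M$ is proper, $\Ind_P(M)$ is proper, and $J=\Ind_P(I')\subseteq\Ind_P(M)\subsetneq B$ forces $J=\Ind_P(M)$ by maximality of $J$. It remains to upgrade $M\in\Max_P(A)$ to $M\in\Max(A)$: since $J=\Ind_P(M)$ is maximal in $B$ and $\Ind_P$ is order-preserving, any proper ideal strictly containing $M$ would induce a proper ideal of $B$ containing $J$, which must equal $J$ — but then by applying $\Coind_P$ and Proposition~\ref{prop:Galois}\ref{i:2} we recover $M$, a contradiction; hence $M\in\Max(A)$.

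For \ref{i:thm_3}$\implies$\ref{i:thm_4} (assuming $A$ unital): surjectivity of $\Ind_P\colon\Max_P(A)\to\Max(B)$ follows from \ref{i:thm_3} once we check that if $I\in\Max(A)$ with $\Ind_P(I)\in\Max(B)$ then $\Coind_P(\Ind_P(I))\in\Max_P(A)$ maps to the same ideal (using Proposition~\ref{prop:Galois}\ref{i:1}); one must verify $\Coind_P(\Ind_P(I))$ is indeed maximal among co-induced ideals, which follows from Proposition~\ref{prop:Galois}\ref{i:4} since the lattice isomorphism between $\Iz_P(A)$ and $\Iz_P(B)$ sends it to the maximal element $\Ind_P(I)$ of the induced ideals below $B$. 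Injectivity follows from Proposition~\ref{prop:Galois}\ref{i:4}: $\Ind_P$ restricted to $\Iz_P(A)$ is a lattice isomorphism onto $\Iz_P(B)$, hence injective on $\Max_P(A)\subseteq\Iz_P(A)$. For \ref{i:thm_4}$\implies$\ref{i:thm_1} (assuming $B$ unital): let $J\in\Iz(B)$ be proper; since $B$ is unital, $J$ is contained in some $M\in\Max(B)$, and by \ref{i:thm_4}, $M=\Ind_P(N)$ for some $N\in\Max_P(A)$, which is proper; then $\Coind_P(J)\subseteq\Coind_P(M)=\Coind_P(\Ind_P(N))=N$ by Proposition~\ref{prop:Galois}\ref{i:2}, and since $N$ is proper, so is $\Coind_P(J)$.

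The main obstacle is the bookkeeping in \ref{i:thm_2}$\implies$\ref{i:thm_3}: one must be careful that the proper ideal $I$ witnessing \ref{i:thm_2} can be replaced by a \emph{co-induced} proper ideal and then enlarged to something in $\Max_P(A)$, and finally that this element of $\Max_P(A)$ is actually a genuine maximal ideal of $A$ rather than merely maximal among co-induced ideals — this last step uses that $\Ind_P(M)$ being a maximal ideal of $B$ together with the Galois correspondence pins $M$ down, so no strictly larger proper ideal of $A$ can exist. Care is also needed to confirm that co-induction sends proper ideals to proper ideals in each place it is invoked, which reduces to the observation that $\Ind_P(A)=B$ and the Galois inequality $\Coind_P(\Ind_P(I))\subseteq I$.
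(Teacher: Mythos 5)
Your treatment of \ref{i:thm_1}$\iff$\ref{i:thm_2}, of \ref{i:thm_3}$\implies$\ref{i:thm_4}, and of \ref{i:thm_4}$\implies$\ref{i:thm_1} is correct and essentially the paper's argument. The genuine gap is in \ref{i:thm_2}$\implies$\ref{i:thm_3}, where the conclusion requires a \emph{genuine} maximal ideal $I\in\Max(A)$ with $J=\Ind_P(I)$. You instead produce $M\in\Max_P(A)$ with $J=\Ind_P(M)$ and then claim that $M$ must already lie in $\Max(A)$. That claim is false, and your argument for it does not close: if $N\supsetneq M$ is proper, you correctly deduce $\Ind_P(N)=J$ and hence $\Coind_P(\Ind_P(N))=\Coind_P(J)=M$, but Lemma~\ref{lem:closures} only gives $\Coind_P(\Ind_P(N))\subseteq N$, i.e.\ $M\subseteq N$ --- which you already knew. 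The equality $\Coind_P(\Ind_P(N))=N$ that would yield $N=M$ and a contradiction holds only when $N$ is itself co-induced, which a maximal ideal of $A$ containing $M$ need not be. A concrete counterexample to the claim: $B=M_2(\Cz)$, $A=\Cz^2$ the diagonal, $P$ the diagonal expectation; then $\Max_P(A)=\{0\}$ and $\Ind_P(0)=0\in\Max(B)$, yet $0\notin\Max(A)$.

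The repair is the paper's (shorter) route: from \ref{i:thm_2} take a proper $I$ with $J\subseteq\Ind_P(I)$ and, using that $A$ is unital, enlarge $I$ to some $N\in\Max(A)$ by Zorn's lemma. Then $J\subseteq\Ind_P(I)\subseteq\Ind_P(N)$, the ideal $\Ind_P(N)$ is proper by part~\ref{i:7} of Proposition~\ref{prop:Galois}, and maximality of $J$ forces $J=\Ind_P(N)$. In other words, one should pass to a maximal ideal of $A$ \emph{before} inducing, rather than trying to upgrade a maximal co-induced ideal afterwards. Your detour through $\Coind_P(J)$, Lemma~\ref{lem:IinM} and $\Max_P(A)$ is exactly the content the paper uses for \ref{i:thm_3}$\implies$\ref{i:thm_4}, where it belongs; it is not needed for, and does not deliver, \ref{i:thm_3}.
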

\begin{proof}
\ref{i:thm_1}$\implies$\ref{i:thm_2}. Let $I\coloneqq \Coind_P(J)$. Then $J\subseteq\Ind_P(I)$ by Lemma \ref{lem:closures}.

\ref{i:thm_2}$\implies$\ref{i:thm_1} follows from the fact that, by Lemma \ref{lem:closures}, 
\[
\Coind_P(J)\subset\Coind_P(\Ind_P(I))\subset I.
\]

\ref{i:thm_2}$\implies$\ref{i:thm_3}. Let $J\in\Max(B)$. Then, \ref{i:thm_2} gives us a proper ideal $I\in\Iz(A)$ such that $J\subseteq\Ind_P(I)$. Since $A$ is unital, we can assume $I$ is maximal. From part \ref{i:7} of Proposition~\ref{prop:Galois}, $\Ind_P(I)$ is a proper ideal, so that $J=\Ind_P(I)$ by maximality of $J$.

\ref{i:thm_3}$\implies$\ref{i:thm_4}. By part \ref{i:4} of Proposition~\ref{prop:Galois}, we have that $\Ind_P$ is injective on $\Max_P(A)$. 
Given $J\in\Max(B)$, we need to find $M\in\Max_P(A)$ such that $\Ind_P(M)=J$. By assumption, there exists a proper ideal $I\in\Iz(A)$ such that $J=\Ind_P(I)$. By part \ref{i:1} of Proposition~\ref{prop:Galois}, we can replace $I$ with $I' \coloneqq \Coind_P(\Ind_P(I))$, which is co-induced and satisfies $J = \Ind_P(I')$. By Lemma~\ref{lem:IinM}, there exists $M\in\Max_P(A)$ such that $I'\subseteq M$. Now we have $J=\Ind_P(I')\subseteq\Ind_P(M)$. Hence $J=\Ind_P(M)$ by maximality of $J$.

\ref{i:thm_4}$\implies$\ref{i:thm_1}. Take $M\in\Max(B)$ such that $J\subseteq M$, and let $I\in\Max_P(A)$ be such that $M=\Ind_P(I)$. 
Since $\Coind_P(J)\subseteq\Coind_P(\Ind_P(I))\subseteq I$, we conclude that $\Coind_P(J)$ is proper.
\end{proof}

\section{Reduced group C*-algebras}\label{sec:rgc}

Throughout this section, we assume that groups are discrete.
Let $G$ be a group. 
Given an action of $G$ on a compact Hausdorff space $X$ by homeomorphisms and $x\in X$, let $G_x \coloneqq \{g\in G:gx=x\}$ be the stabilizer of $x$.

The \emph{Furstenberg boundary} of $G$, denoted by $\partial_FG$, is the universal minimal strongly proximal compact $G$-space (\cite[Section III.1]{G76}). Recall that, for every $x\in \partial_FG$, the stabilizer $G_x$ is amenable (\cite[Proposition 2.7]{BKKO17}). 

We denote the canonical generators of the reduced group C*-algebra $C^*_r(G)$ of $G$ by $\{\lambda_g\}_{g\in G}$. Given a subgroup $H\leq G$, we let $P_H\colon C^*_r(G)\to C^*_r(H)$ be the canonical conditional expectation. Let $\Ind_H\colon\Iz(C^*_r(H))\to\Iz(C^*_r(G))$ and $\Coind_H\colon\Iz(C^*_r(G))\to\Iz(C^*_r(H))$ denote the induction and co-induction maps with respect to $P_H$.

We denote by $\Max_H(C^*_r(H))$ the set of maximal co-induced (proper) ideals of $C^*_r(H)$ via $P_H$.

  The next result is an immediate consequence of \cite[Lemma 7.9]{BKKO17} and Theorem \ref{thm:maxideals}. For completeness, we present a more direct proof. We will use operator algebraic properties of the Furstenberg boundary that can be found in, for example, \cite[Section 2.4]{BKKO17}. Given a group $G$ and unital $C^*$-algebras $A$ and $B$ endowed with $G$-actions, we call a $G$-equivariant unital completely positive linear map from $A$ to $B$ a \emph{$G$-map}.
\begin{theorem}\label{thm:mi}
    Let $G$ be a group and $x\in\partial_F G$. Then,
\begin{enumerate}[\upshape(1)]
    \item for every maximal ideal $J$ of $C^*_r(G)$, there exists a maximal ideal $I$ of $C^*(G_x)$ such that $J=\Ind_{G_x}(I)$;\label{1main}
    \item the map $\Ind_{G_x}$ defines a bijection from $\Max_{G_x}(C^*(G_x))$ to the set of maximal ideals of $C^*_r(G)$. \label{2main}
\end{enumerate}
\end{theorem}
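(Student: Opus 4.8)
The plan is to deduce Theorem~\ref{thm:mi} from Theorem~\ref{thm:maxideals} by verifying condition~\ref{i:thm_1} for the conditional expectation $P_{G_x}\colon C^*_r(G)\to C^*(G_x)$: that is, that $\Coind_{G_x}(J)=\overline{P_{G_x}(J)}$ is a proper ideal of $C^*(G_x)$ whenever $J$ is a proper ideal of $C^*_r(G)$. Note that $C^*_r(G)$ and $C^*_r(G_x)=C^*(G_x)$ (the latter equality by amenability of $G_x$) are unital, so both implications ``$\ref{i:thm_2}\Rightarrow\ref{i:thm_3}\Rightarrow\ref{i:thm_4}$'' and the equivalence with \ref{i:thm_1} are available; once \ref{i:thm_1} holds, parts \ref{1main} and \ref{2main} are exactly \ref{i:thm_3} and \ref{i:thm_4}. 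So the entire content is the properness of the co-induced ideal.

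To prove properness, suppose toward a contradiction that $\overline{P_{G_x}(J)}=C^*(G_x)$ for some proper ideal $J\unlhd C^*_r(G)$; equivalently $1_{C^*(G_x)}\in\overline{P_{G_x}(J)}$. Pick a maximal ideal $M\supseteq J$ and let $\pi\colon C^*_r(G)\to C^*_r(G)/M=:Q$ be the quotient, a unital $*$-homomorphism with $\pi(J)=0$. The strategy is to build, using the Furstenberg boundary machinery referenced from \cite[Section 2.4]{BKKO17}, a state on $C^*_r(G)$ that factors through $Q$ but whose restriction to $C^*(G_x)$ is the canonical trace $\tau$ on $C^*(G_x)$ (or at least a faithful state), which is impossible because $\tau(1)=1$ while any such state must kill $P_{G_x}(J)$, hence be $0$ on the unit once $\overline{P_{G_x}(J)}\ni 1$. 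Concretely: the key input, as in \cite[Lemma~7.9]{BKKO17}, is that since $G\curvearrowright\partial_F G$ and $x\in\partial_F G$, there is a $G$-map $C(\partial_F G)\to Q$ extending to a unital completely positive $G_x$-equivariant map, and composing with a point-evaluation at $x$ exhibits $P_{G_x}$ as an averaging/conditional map compatible with $\pi$; one then argues that the diagram forces $\pi$ restricted to $C^*(G_x)$ to be faithful (equivalently injective), so $\pi(P_{G_x}(b))\neq 0$ for suitable $b$, contradicting $P_{G_x}(J)\ni$ a net converging to $1$.

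More precisely, for the self-contained argument I would: (a) extend the identity map $\id_Q\colon Q\to Q$ to a ucp $G$-map $\phi\colon C^*_r(G)\rtimes_r?\dots$ — actually, better, use that the inclusion $C(\partial_F G)\subseteq C^*_r(G)$-bimodule picture gives a ucp $G$-map $\psi\colon C^*_r(G)\to C(\partial_F G,Q)$ or directly a ucp $G_x$-map $E\colon Q\to C^*(G_x)$ splitting $\pi|_{C^*(G_x)}$, by rigidity of $\partial_F G$ (every $G$-map into $C(\partial_F G)$ is unique and $C(\partial_F G)$ is $G$-injective); (b) deduce $E\circ\pi|_{C^*(G_x)}=\id_{C^*(G_x)}$, so $\pi|_{C^*(G_x)}$ is isometric; (c) conclude $\pi(P_{G_x}(J))=\pi(P_{G_x}(\overline{\pi^{-1}(0)}))$ — wait, cleaner: since $\pi|_{C^*(G_x)}$ is injective and $\overline{P_{G_x}(J)}=C^*(G_x)$, we get $\pi(C^*(G_x))=\pi(\overline{P_{G_x}(J)})=\overline{\pi(P_{G_x}(J))}$, and I'd like $\pi\circ P_{G_x}=E'\circ\pi$ for some map $E'$ killing $\pi(J)=0$, giving $\pi(P_{G_x}(J))\subseteq E'(0)=0$, whence $\pi(C^*(G_x))=0$, contradicting $\pi(1)=1$. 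Establishing the intertwining $\pi\circ P_{G_x}=E'\circ\pi$ — i.e., that $P_{G_x}$ descends to the quotient $Q$ — is the crux, and it is precisely where the special structure of stabilizers of the Furstenberg boundary (amenability of $G_x$ plus $G$-injectivity of $C(\partial_F G)$, via \cite[Lemma~7.9]{BKKO17}) enters. That intertwining step is the main obstacle; everything else is formal given the Galois-connection package of Proposition~\ref{prop:Galois} and Theorem~\ref{thm:maxideals}.
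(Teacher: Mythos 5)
Your formal reduction is correct: both $C^*_r(G)$ and $C^*(G_x)$ are unital, so by Theorem~\ref{thm:maxideals} everything comes down to condition~\ref{i:thm_1}, namely that $\Coind_{G_x}(J)=\overline{P_{G_x}(J)}$ is proper for every proper ideal $J$ of $C^*_r(G)$. That is precisely \cite[Lemma~7.9]{BKKO17}, and the paper itself points out that the theorem follows immediately from that lemma plus Theorem~\ref{thm:maxideals} (its written-out proof instead verifies condition~\ref{i:thm_3} directly, a cosmetic difference). The problem is that you never actually prove the properness, and the mechanisms you propose for it do not work. Injectivity of $\pi|_{C^*(G_x)}$, or a ucp splitting $E\colon Q\to C^*(G_x)$ with $E\circ\pi|_{C^*(G_x)}=\id$, is false in general: take $G$ amenable and nontrivial, so $G_x=G$, $P_{G_x}=\id$, and $\pi$ is evaluation at a character, which is nowhere near injective on $C^*(G)$. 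The algebra-level intertwining $\pi\circ P_{G_x}=E'\circ\pi$ amounts to $P_{G_x}(M)\subseteq M$ for the chosen maximal ideal $M$; you neither prove this nor is it clear for non-normal $G_x$ without circular appeal to the theorem itself, and you explicitly flag it as ``the main obstacle''. Since that obstacle is the entire non-formal content of the statement, the proof is incomplete.

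What actually works is a state-level, rather than algebra-level, compatibility between $P_{G_x}$ and the quotient. By $G$-injectivity of $C(\partial_F G)$ there is a $G$-map $\varphi\colon C^*_r(G)/J\to C(\partial_F G)$; set $\sigma\coloneqq\mathrm{Ev}_x\circ\varphi\circ\pi$, a state vanishing on $J$. Extending further to a $G$-map $\psi\colon C(\partial_F G)\rtimes_r G\to C(\partial_F G)$ and using $G$-rigidity ($\psi|_{C(\partial_F G)}=\mathrm{Id}$), the multiplicative-domain trick with $f\in C(\partial_F G)$ satisfying $f(x)=0$, $f(gx)=1$ gives $\sigma(\lambda_g)=0$ for all $g\notin G_x$. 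Hence $\sigma=\sigma|_{C^*(G_x)}\circ P_{G_x}$, so $\sigma$ annihilates $P_{G_x}(J)$ and therefore $\overline{P_{G_x}(J)}$; since $\sigma(1)=1$, the co-induced ideal is proper. This is the content of \cite[Lemma~7.9]{BKKO17} and, in essence, of the paper's direct proof; your sketch has the right ingredients (evaluation at $x$, $G$-injectivity, rigidity) but never assembles them into this argument.
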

\begin{proof}
\ref{1main}. Let $J$ be a maximal ideal of $C^*_r(G)$ and fix $x\in \partial_F G$. We claim that there exists a state $\sigma$ on $C^*_r(G)$ such that $\sigma|_J=0$ and, for all $g\in G\setminus G_x$, $\sigma(\lambda_g)=0$. 

By $G$-injectivity of $C(\partial_F G)$, there is a $G$-map $\varphi\colon \frac{C^*_r(G)}{J}\to C(\partial_F G)$. Let $\pi\colon C^*_r(G)\to \frac{C^*_r(G)}{J}$ be the quotient. By $G$-injectivity again, there is a $G$-map $\psi\colon C(\partial_F G)\rtimes_r G\to C(\partial_F G)$ such that $\psi|_{C^*_r(G)}=\varphi\circ\pi$. By $G$-rigidity, we have that $\psi|_{C(\partial_F G)}=\mathrm{Id}_{C(\partial_F G)}$. Let $\mathrm{Ev}_x\colon C(\partial_F G)\to \mathbb{C}$ be evaluation in $x$ and $\sigma:=\mathrm{Ev}_x\circ\varphi\circ\pi$. Clearly, $\sigma|_J=0$. Given $g\in G\setminus G_x$, take $f\in C(\partial_F G)$ such that $f(gx)=1$ and $f(x)=0$. Then

\begin{align*} 
\sigma(\lambda_g)=\sigma(\lambda_g)f(gx)=\mathrm{Ev_x}(\psi(\lambda_g))\mathrm{Ev_x}(g^{-1}f)&=\mathrm{Ev_x}(\psi(\lambda_g)g^{-1}f)\\
&=\mathrm{Ev}_x(\psi(f\lambda_g))\\
&=f(x)\sigma(\lambda_g)\\&=0,
\end{align*}
thus showing the claim. 

Let $K:=\{a\in C^*(G_x):\sigma(bac)=0\text{ for all $b,c\in C^*(G_x)$}\}$. Notice that $\sigma|_{C^*(G_x)}\circ P_{G_x}=\sigma$ and $C^*(G_x)$ is contained in the multiplicative domain of $P_{G_x}$. Given $a\in J$, $b,c\in C^*(G_x)$, and $d,e\in C^*_r(G)$, we have that 
\[\sigma(bP_{G_x}(dae)c)=\sigma(P_{G_x}(bdaec))=\sigma(bdaec)=0,\]
where the last equality follows from the fact that $\sigma|_J=0$. Therefore, $J\subset\Ind_{G_x}(K)$ (hence, by maximality, $J=\Ind_{G_x}(K)$). By taking a maximal ideal $I$ of $C^*(G_x)$ which contains $K$, we obtain that $J=\Ind_{G_x}(I)$.

\ref{2main} follows from \ref{1main} and Theorem \ref{thm:maxideals}.
\end{proof}
\begin{remark}
    Given $x\in \partial_F G$, consider the canonical conditional expectation $Q\colon C(\partial_F G)\rtimes_r G\to C^*(G_x)$. It follows from \cite[Theorem 3.4]{CN24} that induction gives a bijection between $\Max_Q(C^*(G_x))$ and $\Max(C(\partial_F G)\rtimes_r G)$. Furthermore, since $Q|_{C^*_r(G)}=P_{G_x}$ and $Q|_{C(\partial_F G)}$ is multiplicative, it is not difficult to check that, for any ideal $J$ of $C^*(G_x)$, it holds that $\Ind_Q(J)\cap C^*_r(G)=\Ind_{G_x}(J)$. Unfortunately, it is not clear whether this line of thinking could lead to a new proof of Theorem \ref{thm:mi}. We thank a referee for suggesting us to include this observation.
\end{remark}
\subsection*{Normal subgroups}
Given a normal subgroup $N\unlhd G$, the canonical conditional expectation $P_N\colon C^*_r(G)\to C^*_r(N)$ is $G$-equivariant with respect to the actions of $G$ on $C^*_r(G)$ and $C^*_r(N)$ given by conjugation. We say that an ideal $I\in\Iz( C_r^*(N))$ is \emph{$G$-invariant} if it is invariant with respect to this conjugation action.

\begin{lemma}
\label{lem:admiss=Ginv}
Let $N\unlhd G$. Then $I\in\Iz( C_r^*(N))$ is co-induced if and only if $I$ is $G$-invariant.
\end{lemma}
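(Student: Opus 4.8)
The plan is to prove both directions by unwinding the definitions of $\Coind_{P_N}$, $\Ind_{P_N}$, and the conjugation action, using the $G$-equivariance of $P_N$ as the engine.

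For the forward direction, suppose $I = \Coind_{P_N}(J) = \overline{P_N(J)}$ for some ideal $J \in \Iz(C^*_r(G))$. I would first note that, since $J$ is an ideal of $C^*_r(G)$, it is automatically invariant under the conjugation action of $G$ on $C^*_r(G)$ (conjugation by $\lambda_g$ is an inner automorphism of $C^*_r(G)$, and ideals are invariant under automorphisms that fix the ideal set-wise — in fact inner automorphisms fix every ideal). Then, because $P_N$ is $G$-equivariant, for $g \in G$ we get $\Ad_{\lambda_g}(P_N(J)) = P_N(\Ad_{\lambda_g}(J)) = P_N(J)$, and passing to closures gives $\Ad_{\lambda_g}(I) = I$. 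Hence $I$ is $G$-invariant.

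For the reverse direction, suppose $I \unlhd C^*_r(N)$ is $G$-invariant; I must exhibit $I$ as $\Coind_{P_N}(J)$ for some $J$. The natural candidate is $J \coloneqq \Ind_{P_N}(I)$, since by Proposition~\ref{prop:Galois}\ref{i:2} we always have $\Coind_{P_N}(\Ind_{P_N}(\Coind_{P_N}(J'))) = \Coind_{P_N}(J')$, so it suffices to show $\Coind_{P_N}(\Ind_{P_N}(I)) = I$; combined with Lemma~\ref{lem:closures}(2) (which gives $\subseteq$), I only need the inclusion $I \subseteq \Coind_{P_N}(\Ind_{P_N}(I)) = \overline{P_N(\Ind_{P_N}(I))}$. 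Since $C^*_r(N) \subseteq C^*_r(G)$ and $P_N$ restricts to the identity on $C^*_r(N)$, it is enough to show $I \subseteq \Ind_{P_N}(I)$, i.e. that for every $a \in I$ one has $P_N(x a y) \in I$ for all $x, y \in C^*_r(G)$. By density and the module properties of $P_N$ it suffices to check this for $x = \lambda_g$, $y = \lambda_h$ with $g, h \in G$. Writing $\lambda_g a \lambda_h = \lambda_{gh} (\lambda_h^{-1} a \lambda_h) = \lambda_{gh}\,\Ad_{\lambda_h^{-1}}(a)$, and using that $gh$ decomposes as $n k$ with $n \in N$ and $k$ in a fixed transversal — actually more cleanly: $P_N(\lambda_g a \lambda_h)$ is nonzero only along the $N$-part, and $G$-equivariance of $P_N$ gives $P_N(\lambda_g a \lambda_h) = \lambda_g P_N(a\,\lambda_h\lambda_g^{-1})\lambda_g\cdot(\text{correction})$; the clean statement is that $P_N(\lambda_g b \lambda_g^{-1}) = \lambda_g P_N(b)\lambda_g^{-1}$ and $P_N(b\lambda_g)$ vanishes unless $g\in N$. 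Putting these together, $P_N(\lambda_g a \lambda_h)$ is, up to a unitary $\lambda_g$ or $\lambda_h$ in $C^*_r(N)$, a conjugate $\Ad_{\lambda_g}(a)$ of $a$ by some group element, and since $I$ is $G$-invariant and an ideal, $\Ad_{\lambda_g}(a) \in I$ and multiplying by unitaries of $C^*_r(N)$ keeps us in $I$. Hence $P_N(\lambda_g a \lambda_h) \in I$, giving $a \in \Ind_{P_N}(I)$.

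The main obstacle I expect is the bookkeeping in the reverse direction: carefully justifying the reduction from arbitrary $x, y \in C^*_r(G)$ to group-element generators $\lambda_g, \lambda_h$ (this uses continuity of $P_N$ and linearity/density of $\spn\{\lambda_g\}$, plus that $I$ is closed), and correctly tracking how $P_N(\lambda_g a \lambda_h)$ relates to a conjugate of $a$ — the point being that $P_N(\lambda_g b)$ kills all group components of $b\lambda_g$ outside $N$, so the surviving term is of the form $\lambda_{n}\,(\text{element of }C^*_r(N))$ obtained by conjugating $a$. One should double-check the edge where $gh \notin N$, in which case $P_N(\lambda_g a \lambda_h)$ picks out exactly the part of $\lambda_g a \lambda_h$ supported on $N$, and verify this part still lies in the conjugate $\Ad_{\lambda_{g}}(I)$. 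Once the identity $P_N(\lambda_g a \lambda_h) \in \Ad_{\lambda_g}(C^*_r(N) \cdot I \cdot C^*_r(N)) \subseteq I$ is nailed down, both inclusions close up and the lemma follows.
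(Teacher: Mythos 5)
Your proposal is correct and follows essentially the same route as the paper: the forward direction via $G$-equivariance of $P_N$ plus invariance of ideals of $C^*_r(G)$ under inner automorphisms, and the reverse direction by reducing $I\subseteq\Ind_N(I)$ to the computation of $P_N(\lambda_g a\lambda_h)$ on generators, where the multiplicative-domain property shows the result is either $0$ or a unitary of $C^*_r(N)$ times a $G$-conjugate of $a$, hence in $I$. The one garbled intermediate formula ($\lambda_g P_N(a\lambda_h\lambda_g^{-1})\lambda_g\cdot(\text{correction})$) is harmless since the clean identities you then state, applied to the decomposition $\lambda_g a\lambda_h=\lambda_{gh}\,\Ad_{\lambda_h^{-1}}(a)$ (the paper uses the mirror-image $(\lambda_g a\lambda_g^*)\lambda_{gh}$), complete the argument.
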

\begin{proof}
Suppose that $I$ is co-induced. 
By part \ref{i:2} of Proposition \ref{prop:Galois}, $I=\overline{P_N(\Ind_N(I))}$. For $g\in G$, we have 
\begin{align*}
\lambda_g\overline{P_N(\Ind_N(I))}\lambda_g^*=\overline{\lambda_gP_N(\Ind_N(I))\lambda_g^*}&=\overline{P_N(\lambda_g\Ind_N(I)\lambda_g^*)}\\
&=\overline{P_N(\Ind_N(I))},
\end{align*}
so $I=\overline{P_N(\Ind_N(I))}$ is a $G$-invariant ideal of $C_r^*(N)$. 

Now suppose $I$ is a $G$-invariant ideal of $C_r^*(N)$. 
By Lemma \ref{lem:closures}, in order to conclude that $I$ is co-induced, it suffices to show that $I\subseteq \overline{P_N(\Ind_N(I))}$. 
Let $a\in I$. Since $C_r^*(N)$ lies in the multiplicative domain of $P_N$, we have for all $g,h\in G$, 
\begin{align*}  
P_N(\lambda_g a\lambda_h)=P_N((\lambda_g a\lambda_g^*)\lambda_{gh})&=P_N((\lambda_g a\lambda_g^*))P_N(\lambda_{gh})\\
&=\begin{cases}
    (\lambda_g a\lambda_g^*)\lambda_{gh} & \text{ if }gh\in N,\\
    0 & \text{ if } gh\not\in N.
\end{cases}
\end{align*}

Since $\lambda_g a \lambda_g^*\in I$ by $G$-invariance and $\{\lambda_k :k\in G\}$ spans a dense *-subalgebra of $C_r^*(G)$, this shows that $a\in \Ind_N(I)$. 
Hence $I\subseteq\Ind_N(I)$ and $I\subseteq P_N(\Ind_N(I))$.
\end{proof}
\begin{remark}\label{rem:t}
Let $N$ be a normal subgroup of a group $G$. By \cite[Theorem 2.1]{B91}, there is a twisted action of $\frac{G}{N}$ on $C^*_r(N)$ such that $C^*_r(G)\cong C^*_r(N)\rtimes_r\frac{G}{N}$. Under this identification, it is not difficult to check that, given a $G$-invariant ideal $I\in \Iz(C^*_r(N))$, we have that 
\[
\Ind_N(I)=\ker\left(C^*_r(N)\rtimes_r\frac{G}{N}\to\frac{C^*_r(N)}{I}\rtimes_r\frac{G}{N}\right)
\]
(existence of the canonical map $C^*_r(N)\rtimes_r\frac{G}{N}\to\frac{C^*_r(N)}{I}\rtimes_r\frac{G}{N}$ follows, for example, from \cite[Proposition 21.3]{E17}). 
\end{remark}

 Recall that a group $G$ is said to be \emph{C*-simple} if $C^*_r(G)$ is simple. 
 By \cite[Theorem 3.1]{BKKO17}, $G$ is C*-simple if and only if the action $G\acts \partial_F G$ is free. 

The \emph{amenable radical} $R(G)$ of $G$ is the largest amenable normal subgroup of $G$, and it coincides with $\bigcap_{x\in\partial_F G}G_x$, see \cite[Proposition 2.8]{BKKO17} or \cite[Proposition 7]{F03}. 
Since $\partial_F G$ and $\partial_F\left(\frac{G}{R(G)}\right)$ are homeomorphic $G$-spaces (\cite[Theorem 11]{O14}), we have that $\frac{G}{R(G)}$ is C*-simple if and only if $G_x=R(G)$ for all $x\in \partial_F G$.

\begin{proposition}\label{prop:inv}
Let $G$ be a group such that $\frac{G}{R(G)}$ is C*-simple. 
Then, the induction map $\Ind_{R(G)}$ defines a bijection from maximal $G$-invariant ideals of $C^*(R(G))$ to maximal ideals of $C^*_r(G)$. 
\end{proposition}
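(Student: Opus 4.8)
The plan is to deduce this directly from Theorem~\ref{thm:mi} together with Lemma~\ref{lem:admiss=Ginv}. Fix any $x\in\partial_F G$. Since $\tfrac{G}{R(G)}$ is C*-simple, the discussion preceding the proposition gives $G_x=R(G)$; in particular $R(G)=G_x$ is itself one of the Furstenberg stabilizers, so Theorem~\ref{thm:mi} applies with this $x$. Part~(2) of that theorem then says precisely that $\Ind_{G_x}=\Ind_{R(G)}$ restricts to a bijection from $\Max_{G_x}(C^*(G_x))=\Max_{R(G)}(C^*(R(G)))$ onto the set of maximal ideals of $C^*_r(G)$.

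It remains to identify $\Max_{R(G)}(C^*(R(G)))$ — the maximal co-induced ideals of $C^*(R(G))$ via $P_{R(G)}$ — with the maximal $G$-invariant ideals of $C^*(R(G))$. This is immediate from Lemma~\ref{lem:admiss=Ginv}, applied to the normal subgroup $N=R(G)\unlhd G$: an ideal of $C^*_r(R(G))=C^*(R(G))$ (the two coincide since $R(G)$ is amenable) is co-induced if and only if it is $G$-invariant. Hence the co-induced ideals and the $G$-invariant ideals form the same sub-poset of $\Iz(C^*(R(G)))$, so their maximal elements agree, i.e. $\Max_{R(G)}(C^*(R(G)))$ is exactly the set of maximal $G$-invariant ideals. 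Combining this identification with the bijection from the previous paragraph yields the claim.

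I do not anticipate a genuine obstacle here: the statement is essentially a repackaging of Theorem~\ref{thm:mi} in the special case where the stabilizer equals the amenable radical, using Lemma~\ref{lem:admiss=Ginv} to translate ``co-induced'' into the more familiar ``$G$-invariant''. The only point deserving a word of care is the observation that $R(G)$ being amenable forces $C^*_r(R(G))=C^*(R(G))$, so that the hypotheses of Lemma~\ref{lem:admiss=Ginv} (stated for $C^*_r(N)$) and the conclusion of Theorem~\ref{thm:mi} (stated for $C^*(G_x)$) refer to the same C*-algebra; and the trivial remark that a bijection between two posets that restricts correctly carries maximal elements to maximal elements, which here is already built into Theorem~\ref{thm:mi}\,(2).
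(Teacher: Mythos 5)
Your proof is correct and is essentially the paper's own argument: the authors likewise deduce the proposition by combining Theorem~\ref{thm:mi} (applied at a point $x\in\partial_F G$ where $G_x=R(G)$, which holds by the discussion preceding the proposition) with Lemma~\ref{lem:admiss=Ginv} identifying co-induced ideals with $G$-invariant ones. Your extra remark that amenability of $R(G)$ gives $C^*_r(R(G))=C^*(R(G))$ is a correct and worthwhile clarification, but the route is the same.
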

\begin{proof}
  This follows from Theorem \ref{thm:mi} and Lemma \ref{lem:admiss=Ginv}. Alternatively, it follows from \cite[Theorem~4.3]{BK18} and Remark \ref{rem:t}.  
\end{proof}

The rest of the section is devoted to exhibiting an example of a group $G$ such that $\frac{G}{R(G)}$ is C*-simple, and a primitive ideal $I$ of $C^*_r(G)$ which is not induced from $C^*(R(G))$. 
This shows that, unfortunately, an analogue of Proposition \ref{prop:inv} (and of Theorem \ref{thm:mi}) does not hold for primitive ideals.

We start with a general lemma which will be used for producing pure states, and thus primitive ideals via the GNS representation. 
Given a C*-algebra $A$, we let $\mathrm{PS}(A)$ denote the set of pure states on $A$.

\begin{lemma}\label{lem:ps}
Let $G$ be a group acting on a compact space $X$. Given $x\in X$ and $\tau\in \mathrm{PS}(C^*_r(G_x))$, there is $\sigma\in \mathrm{PS}(C(X)\rtimes_r G)$ such that, for $f\in C(X)$ and $g\in G$, we have
\begin{align}\label{eq:sigma}
\sigma(f\lambda_g)=
\begin{cases}
f(x)\tau(\lambda_g) &\text{ if $g\in G_x$},\\
0 &\text{ otherwise.} 
\end{cases}
\end{align}
\end{lemma}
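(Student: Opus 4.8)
The plan is to build the desired state $\sigma$ on $C(X)\rtimes_r G$ by first constructing a suitable state on a smaller crossed product associated with the orbit of $x$ and then using a conditional-expectation argument to promote it, finally checking purity by a GNS/irreducibility argument. First I would record the structural observation that the stabilizer $G_x$ is a subgroup and that there is a canonical conditional expectation $E\colon C(X)\rtimes_r G\to C^*_r(G_x)$: concretely, $E$ should send $f\lambda_g$ to $f(x)\lambda_g$ when $g\in G_x$ and to $0$ otherwise. To see this is a genuine conditional expectation, I would realize $C(X)\rtimes_r G$ on $\ell^2(G)\otimes\cH_X$ (or on the regular representation induced from a point) and exhibit $E$ as compression by the projection onto the subspace corresponding to $G_x$; alternatively one composes evaluation at $x$ (a $G_x$-equivariant conditional expectation $C(X)\rtimes_r G\to C^*_r(G_x)$ after restricting to the stabilizer, using that $\mathrm{Ev}_x$ is $G_x$-invariant) with the canonical expectation. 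The key point is that $\mathrm{Ev}_x\colon C(X)\to\Cz$ is $G_x$-invariant, so it induces a completely positive $G_x$-equivariant splitting, and on the full crossed product this gives the expectation $E$ landing in $C^*_r(G_x)$.

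Granting $E$, the natural candidate is $\sigma\coloneqq\tau\circ E$, which is automatically a state and manifestly satisfies the formula \eqref{eq:sigma}. The remaining — and main — task is to show that $\sigma$ is \emph{pure} when $\tau$ is pure. Here I would invoke the standard criterion: if $E\colon B\to C$ is a conditional expectation onto a C*-subalgebra $C$ such that $C$ lies in the multiplicative domain of $E$ (which it does), and if $\tau$ is a pure state on $C$, then $\tau\circ E$ is pure on $B$ provided the GNS triple of $\tau\circ E$ restricts correctly. The cleanest route is: let $(\pi_\sigma,\cH_\sigma,\xi_\sigma)$ be the GNS triple of $\sigma$; I would show that the closed subspace $\overline{\pi_\sigma(C^*_r(G_x))\xi_\sigma}$ carries a copy of the (irreducible) GNS representation of $\tau$, that $\xi_\sigma$ is cyclic for $\pi_\sigma(C^*_r(G_x))$ inside that subspace, and then argue that any operator in $\pi_\sigma(C(X)\rtimes_r G)'$ fixing $\xi_\sigma$ must be scalar. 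For the last step one uses that $C(X)\rtimes_r G$ is densely spanned by the $f\lambda_g$, that $\sigma(f\lambda_g)$ is determined by $\tau$ and evaluation at $x$, and that the commutant of $\pi_\tau(C^*_r(G_x))$ is already trivial. Purity of $\sigma$ is then equivalent to triviality of $\pi_\sigma(C(X)\rtimes_r G)'$, which follows because the extra generators $f\lambda_g$ with $g\notin G_x$, or $f$ with $f(x)\neq f$ at other orbit points, cannot enlarge the commutant beyond what $\pi_\tau$ already forces to be scalar.

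I expect the main obstacle to be the purity argument rather than the construction of $\sigma$: verifying $E$ is a conditional expectation is routine, and $\tau\circ E$ satisfies \eqref{eq:sigma} by inspection, but showing $\tau\circ E$ is \emph{pure} requires care because $E$ is highly non-injective and one must rule out that the GNS space of $\sigma$ splits. The technically safest formulation is to use the \emph{excision}/states-perturbation machinery or, more concretely, the fact (see e.g. standard references on pure state extensions, or the argument in \cite[Section 2]{KK17}-type constructions) that for a pure state $\tau$ on $C^*_r(G_x)$ and the above expectation, the state $\tau\circ E$ has trivial GNS commutant because the von Neumann algebra it generates is already generated by the image of $C^*_r(G_x)$ together with the diagonal, on which $\sigma$ is multiplicative. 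Once purity is established, the GNS representation of $\sigma$ yields the primitive ideal used later in the construction of Example~\ref{ex:cs}.
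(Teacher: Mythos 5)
Your construction of $\sigma$ is essentially the one in the paper: the paper takes $\sigma=\tau\circ\phi\circ\psi$, where $\psi$ is (a $\tau$-twisted form of) the canonical conditional expectation onto $C(X)\rtimes_r G_x$ and $\phi(f\lambda_g)=f(x)\lambda_g$ is the $*$-homomorphism induced by the $G_x$-invariant character $\mathrm{Ev}_x$; up to bookkeeping this is your map $E$, and the formula \eqref{eq:sigma} follows by inspection. So the first half of your plan is sound.

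The gap is in the purity argument, which you correctly identify as the main task but do not actually carry out. You are right that ``pure state composed with a conditional expectation is pure'' fails in general (e.g.\ composing the canonical expectation $C^*_r(\Zz)\to\Cz 1$ with the identity of $\Cz$ gives the canonical trace, which is not pure), but your substitute --- that the GNS commutant of $\sigma$ is trivial because the generators $f\lambda_g$ with $g\notin G_x$ ``cannot enlarge the commutant'' --- is an assertion rather than a proof, and the appeal to unspecified excision/perturbation machinery does not supply the missing step. The decisive point, which your sketch never reaches, is an elementary convexity computation: if $\sigma=t\sigma_1+(1-t)\sigma_2$, then purity of $\mathrm{Ev}_x$ on $C(X)$ and of $\tau$ on $C^*_r(G_x)$ force $\sigma_i|_{C(X)}=\mathrm{Ev}_x$ (so $C(X)$ lies in the multiplicative domain of each $\sigma_i$) and $\sigma_i|_{C^*_r(G_x)}=\tau$; then for $g\notin G_x$ one chooses $f\in C(X)$ with $f(x)=0$ and $f(gx)=1$ and uses the covariance relation $f\lambda_g=\lambda_g\,(g^{-1}f)$ to get
\[
\sigma_i(\lambda_g)=\sigma_i(\lambda_g)f(gx)=\sigma_i\bigl(\lambda_g (g^{-1}f)\bigr)=\sigma_i(f\lambda_g)=f(x)\sigma_i(\lambda_g)=0 .
\]
This pins down each $\sigma_i$ on the dense span of the elements $f\lambda_g$ and yields $\sigma_1=\sigma_2=\sigma$ directly, with no GNS or commutant analysis needed. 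Without this separation-of-orbit-points trick (or an equivalent computation inside the commutant, which requires the same choice of $f$), purity does not follow from what you have written.
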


\begin{proof}
Let $\psi\colon C(X)\rtimes_r G\to C(X)\rtimes_r G_x$ be the conditional expectation given by
\begin{align*}
\psi(f\lambda_g)=
\begin{cases}
f\tau(\lambda_g) &\text{ if $g\in G_x$},\\
0 &\text{ otherwise,} 
\end{cases}
\end{align*}
and let $\phi\colon C(X)\rtimes_r G_x\to C^*_r(G_x)$ be the $*$-homomorphism given by $\phi(f\lambda_g)=f(x)\lambda_g$, for all $f\in C(X)$ and $g\in G$. 
Then, $\sigma\coloneqq\tau\circ\phi\circ\psi$ satisfies \eqref{eq:sigma}. 

Next, we show that $\sigma$ is a pure state. Suppose $\sigma=t\sigma_1+(1-t)\sigma_2$, for some $t\in(0,1)$ and states $\sigma_1,\sigma_2$. Since $\sigma|_{C(X)}$ is the point-evaluation $\mathrm{Ev}_x$, we have that $\sigma|_{C(X)}=\sigma_1|_{C(X)}=\sigma_2|_{C(X)}=\mathrm{Ev}_x$. Furthermore, since $\tau\in \mathrm{PS}(C^*_r(G_x))$ and $\sigma|_{C^*_r(G_x)}=\tau$, we have that $\sigma|_{C^*_r(G_x)}=\sigma_1|_{C^*_r(G_x)}=\sigma_2|_{C^*_r(G_x)}$.

Given $g\notin G_x$, we have that $gx\neq x$. Take $f\in C(X)$ such that $0\leq f\leq 1$, $f(x)=0$ and $f(gx)=1$. For $i=1,2$, we have that
\[
\sigma_i(\lambda_g)=\sigma_i(\lambda_g)f(gx)=\sigma_i(\lambda_g)\sigma_i(g^{-1}f)=\sigma_i(f)\sigma_i(\lambda_g)=f(x)\sigma_i(\lambda_g)=0.
\]
Thus $\sigma=\sigma_1=\sigma_2$.
\end{proof}

The idea for the next result is to use the fact (already employed in \cite[Example 6.5]{KS222}) that, if $H$ is an amenable nontrivial subgroup of a group $G$, then $c_0(G/H)\rtimes_r G\cong K(\ell^2(G/H))\otimes C^*(H)$ has a nonzero ideal $I$ such that $I\cap c_0(G/H)=\{0\}$. 

We let $\Zz_2\wr_{G/H}G\coloneqq \left(\bigoplus_{G/H}\Zz_2\right)\rtimes G$ denote the \emph{wreath product}
for the canonical action of $G$ on $G/H$.

\begin{proposition}\label{prop:ex}
Suppose $H$ is an amenable nontrivial infinite-index subgroup of a group $G$. Then, there exists a primitive ideal $I$ in $C^*_r(\Zz_2\wr_{G/H}G)$ that is not induced from $C^*(\bigoplus_{G/H}\Zz_2)$. 
\end{proposition}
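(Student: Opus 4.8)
\emph{Strategy and setup.} Write $\Gamma\coloneqq\Zz_2\wr_{G/H}G=N\rtimes G$, where $N\coloneqq\bigoplus_{G/H}\Zz_2$. Identifying $C^*(N)$ with $C(\widehat N)$ for $\widehat N=\{0,1\}^{G/H}$ equipped with the shift $G$-action, we have $C^*_r(\Gamma)\cong C(\widehat N)\rtimes_r G$ (Remark~\ref{rem:t}). Let $y\in\widehat N$ be the character with $y_{aH}=[aH=eH]$; a direct check gives $G_y=H$. The plan is: produce a pure state of $C^*_r(\Gamma)$ via Lemma~\ref{lem:ps}, so that its GNS kernel $I$ is a primitive ideal; then show $I$ is not induced from $C^*(N)$ by assuming it is, identifying the resulting quotient of $C^*_r(\Gamma)$ with a reduced crossed product, and exhibiting inside that crossed product a nonzero element annihilated by the quotient map --- contradicting faithfulness.

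\emph{The primitive ideal and its GNS representation.} Apply Lemma~\ref{lem:ps} with $X=\widehat N$, $x=y$, and $\tau$ the trivial character of $C^*_r(H)$ (a pure state, available because $H$ is amenable). This yields a pure state $\sigma$ on $C^*_r(\Gamma)$ with $\sigma(f\lambda_g)=f(y)$ for $g\in H$ and $\sigma(f\lambda_g)=0$ otherwise; set $I\coloneqq\ker\pi_\sigma$. A GNS computation identifies $\pi_\sigma$ with the representation of $C(\widehat N)\rtimes_r G$ on $\ell^2(G/H)$ in which $f\in C(\widehat N)$ acts as the multiplication operator $aH\mapsto f(\delta_{aH})$ and $\lambda_g$ acts by the quasi-regular representation $\lambda_{G/H}(g)$ --- equivalently, $\pi_\sigma$ is induced from the character of $C(\widehat N)\rtimes_r H$ that is evaluation at $y$ on $C(\widehat N)$ and trivial on $H$. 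In particular $\ker(\pi_\sigma|_{C^*_r(N)})=\{f\in C(\widehat N):f\equiv 0\text{ on }G\cdot y\}=C_0(\widehat N\setminus Z)$, where $Z\coloneqq\overline{G\cdot y}$. Since $[G:H]=\infty$, the orbit $G\cdot y$ is discrete in $\widehat N$ and $Z=(G\cdot y)\sqcup\{\mathbf 0\}$ is the one-point compactification of the discrete $G$-space $G/H$, with $\mathbf 0$ (the trivial character) the added $G$-fixed point.

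\emph{Reduction of the induction hypothesis.} Suppose $I$ is induced from $C^*(N)$. By part~\ref{i:1} of Proposition~\ref{prop:Galois} we may take $I=\Ind_N(J)$ with $J\coloneqq\Coind_N(I)$ co-induced; by Lemma~\ref{lem:admiss=Ginv} (applied with ambient group $\Gamma$; as $N$ is abelian, $\Gamma$-invariance here just means $G$-invariance) $J$ is a $G$-invariant ideal of $C(\widehat N)$, so $J=C_0(U)$ for a $G$-invariant open $U\subseteq\widehat N$. The multiplicative-domain computation in the proof of Lemma~\ref{lem:admiss=Ginv} shows $\Ind_N(J)\cap C^*_r(N)=J$; combined with the previous paragraph this forces $J=I\cap C^*_r(N)=C_0(\widehat N\setminus Z)$. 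Hence $I=\Ind_N(C_0(\widehat N\setminus Z))$, which by Remark~\ref{rem:t} is the kernel of the canonical surjection $\rho\colon C^*_r(\Gamma)=C(\widehat N)\rtimes_r G\to C(Z)\rtimes_r G$ induced by restriction $C(\widehat N)\to C(Z)$. As $\ker\pi_\sigma=\ker\rho$, the representation $\pi_\sigma$ factors as $\bar\pi\circ\rho$ for a \emph{faithful} representation $\bar\pi$ of $C(Z)\rtimes_r G$ on $\ell^2(G/H)$.

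\emph{The contradiction, and the hard part.} Fix $h\in H\setminus\{e\}$ and let $p\coloneqq\chi_{\{y\}}\in C(Z)$, which is continuous because $y$ is isolated in $Z$ (it is cut out by the clopen cylinder $\{c:c_{eH}=1\}$, whose indicator is a lift of $p$ to $C(\widehat N)$). From the description of $\pi_\sigma$, $\bar\pi(p)$ is the rank-one projection onto the standard basis vector of $\ell^2(G/H)$ at $eH$, and $\bar\pi(\lambda_h)=\lambda_{G/H}(h)$ fixes that vector; thus $\bar\pi(p\lambda_h)=\bar\pi(p)\lambda_{G/H}(h)=\bar\pi(p)$, i.e.\ $\bar\pi(p\lambda_h-p)=0$. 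On the other hand the canonical conditional expectation $C(Z)\rtimes_r G\to C(Z)$ sends $p\lambda_h-p$ to $-p\neq 0$, so $p\lambda_h-p\neq 0$; this contradicts faithfulness of $\bar\pi$. Therefore $I$ is not induced from $C^*(N)$. I expect the main work to be the honest verification of the concrete form of $\pi_\sigma$ (the GNS computation, or recognizing it as an induced representation of the reduced crossed product) and of the one-point-compactification description of $Z$ --- the latter being precisely where the infinite-index hypothesis is used; the remaining Galois-theoretic bookkeeping (in particular $\Ind_N(J)\cap C^*_r(N)=J$ for co-induced $J$) is formal.
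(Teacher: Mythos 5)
Your proof is correct and follows essentially the same route as the paper: the same pure state obtained from Lemma~\ref{lem:ps} applied to the trivial character of $H$ at the point $\delta_{eH}$, the same reduction to a $G$-invariant open set which is then identified with the complement of the one-point compactification of $G/H$ (using $[G:H]=\infty$), and the same witness element $p(\lambda_h-1)$ with $h\in H\setminus\{e\}$. The only differences are presentational --- you compute the GNS representation explicitly on $\ell^2(G/H)$ and phrase the final contradiction via Remark~\ref{rem:t} and faithfulness of the quotient representation on $C(Z)\rtimes_r G$, whereas the paper checks directly that $P(f(1-\lambda_g))=f\notin C_0(Y^c)$ --- and these are logically equivalent.
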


\begin{proof}
Let $X\coloneqq\prod_{G/H}\{0,1\}$, and consider the action of $G$ on $X$ given by $(gx)(kH)\coloneqq x(g^{-1}kH)$, for all $g\in G$, $x\in X$, and $kH\in G/H$. By identifying $C^*(\bigoplus_{G/H}\Zz_2)$ and $C(X)$, we have that $C^*_r(\Zz_2\wr_{G/H}G)\cong C(X)\rtimes_r G $, and $P_{\bigoplus_{G/H}\Zz_2}$ is identified with the canonical conditional expectation $P\colon C(X)\rtimes_r G\to C(X)$. 

Notice that the stabilizer of $\delta_{H}\in X$ is $H$. 
Since $H$ is amenable, we can apply Lemma \ref{lem:ps} to the trivial character $C^*(H)\to\mathbb{C}$ to obtain $\sigma\in \mathrm{PS}(C(X)\rtimes_r G)$ such that 
\begin{align*}
\sigma(f\lambda_g)=
\begin{cases}
f(\delta_{H}) &\text{ if $g\in H$},\\
0 &\text{ otherwise,} 
\end{cases}
\end{align*}
for all $f\in C(X)$ and $g\in G$.

Let $\pi_\sigma$ be the GNS representation of the pure state $\sigma$. We claim that $\ker\pi_\sigma$ is not induced from $C(X)$ via $P$. Indeed, suppose that there is an open set $U\subseteq X$ such that 
\[
\ker\pi_\sigma=\Ind_P(C_0(U))=\{a\in C(X)\rtimes_r G:\forall g\in G, P(a\lambda_g)\in C_0(U)\}.
\]
By part \ref{i:6} of Proposition \ref{prop:Galois} and Lemma \ref{lem:admiss=Ginv}, we can assume that $U$ is $G$-invariant.

Let $Y\coloneqq \overline{\{\delta_{gH}\in X:g\in G\}}$ and note that $Y$ is homeomorphic to the one-point compactification of the discrete set $G/H$, where the point at infinity is the function that is constantly zero.

We claim that $Y^c=U$. Since $Y$ is $G$-invariant and $\sigma|_{C(X)}$ is the point-evaluation on $\delta_H\in Y$, we have that, given $f\in C_0(Y^c)$ and $r,s\in G$, it holds that $\sigma(\lambda_rf\lambda_s)=0$. Therefore, $C_0(Y^c)\subseteq \ker\pi_\sigma$, and $Y^c\subseteq U$. 

Suppose that $Y\cap U\neq\emptyset$. Since $U$ is open and $G$-invariant, we have that $\delta_H\in U$. Take $f\in C_0(U)$ such that $f(\delta_H)\neq 0$. Then, $\sigma(f)\neq 0$ and $f\notin \ker\pi_\sigma$, thus contradicting the fact that $C_0(U)\subseteq\Ind_P(C_0(U))= \ker\pi_\sigma$. Therefore, $Y^c=U$.

Finally, take $f\in C(X)$ such that $f(\delta_{H})=1$ and $f|_{Y\setminus\{\delta_H\}}=0$ (we can choose $f$ in this way because $\delta_H$ is an isolated point of $Y$). Fix $g\in H\setminus\{e\}$, and let us show that $f(1-\lambda_g)\in\ker\pi_\sigma\setminus \Ind_P(C_0(Y^c))$. 

1) $f(1-\lambda_g)\in \ker\pi_\sigma$: We will use the fact that $C(X)$ is in the multiplicative domain of $\sigma$. Given $r,s\in G$, we have that
\[
\sigma(\lambda_rf(1-\lambda_g)\lambda_s)=f(r^{-1}H)\sigma(\lambda_{rs}-\lambda_{rgs})=0.
\]

2) $f(1-\lambda_g)\notin \Ind_P(C_0(Y^c))$: This follows from the fact that $P(f(1-\lambda_g))=f\notin C_0(Y^c))$.

This concludes the proof that $\ker\pi_\sigma$ is not induced from $C(X)$ via $P$.
\end{proof}

\begin{example}\label{ex:cs}
Let $G$ be a C*-simple group and let $H\leq G$ be a nontrivial amenable subgroup (for example, $H=\langle g\rangle$ for some $g\in G\setminus\{e\}$). Then, the amenable radical of $K \coloneqq\Zz_2\wr_{G/H}G$ is $\bigoplus_{G/H}\Zz_2$, and $\frac{K}{R(K)}=G$ is C*-simple. Thus, a version of Proposition \ref{prop:inv} (and of Theorem \ref{thm:mi}) does not hold for primitive ideals.
\end{example}

\section{Application: Thompson's groups}\label{sec:tho}
Let $\Zz[1/2]$ be the set of dyadic rationals and let $\Omega \coloneqq [0,1)\cap\Zz[1/2]$. 
Thompson's group $T$ is the group of homeomorphisms on $\frac{\mathbb{R}}{\mathbb{Z}}$ that are piecewise linear, with finitely many breakpoints, all at $\Omega$, and slopes in $2^\mathbb{Z}$. Thompson's group $F$ is the subgroup of $T$ which stabilizes $0$. The commutator $[F,F]$ coincides with the set of elements of $T$ which fix pointwise a neighborhood of $0$ (\cite[Theorem 4.1]{CFP96}).

The next result is well-known (see, for example, \cite[Lemma 4.2]{CFP96}). For the convenience of the reader, we provide a proof.

\begin{lemma}\label{tecint}
Given $I,J\subseteq \Rz$ nondegenerate closed intervals with dyadic rational endpoints, there exists a piecewise linear homeomorphism $\varphi\colon I\to J$ with finitely many breakpoints in $\Zz[1/2]$, and slopes in $2^\Zz$. 
\end{lemma}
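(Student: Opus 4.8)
The plan is to reduce the general case to a single normalizing move, namely mapping an arbitrary nondegenerate dyadic interval to the standard interval $[0,1]$, and then to build such a map by a finite induction using only the two ``elementary'' piecewise-linear homeomorphisms that every treatment of Thompson's groups relies on. First I would observe that it suffices to produce, for each nondegenerate closed interval $I=[a,b]$ with $a,b\in\Zz[1/2]$, a piecewise linear homeomorphism $\varphi_I\colon I\to[0,1]$ with finitely many breakpoints in $\Zz[1/2]$ and slopes in $2^\Zz$; for then $\varphi_J^{-1}\circ\varphi_I\colon I\to J$ does the job, since the class of such maps is closed under composition and inversion (composition multiplies slopes, hence keeps them in $2^\Zz$, and the breakpoint set of a composite is contained in the preimage of one breakpoint set union the other, still a finite subset of $\Zz[1/2]$; inversion replaces slopes by their reciprocals).

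Next I would handle the affine part: write $b-a=2^{-m}\cdot q$ for some $m\in\Zz$ and odd $q\in\Nz$ (possible since $b-a$ is a positive dyadic rational), so that after applying the affine map $t\mapsto 2^{m}(t-a)$ — which has slope $2^{m}\in 2^\Zz$ and sends dyadic endpoints to dyadic endpoints — we are reduced to the case $I=[0,q]$ with $q$ a positive \emph{odd} integer. The remaining, genuinely combinatorial, step is to construct for each positive integer $n$ a piecewise linear homeomorphism $\psi_n\colon[0,n]\to[0,1]$ with finitely many dyadic breakpoints and slopes in $2^\Zz$. For this I would induct on $n$: the case $n=1$ is the identity, and given $\psi_n\colon[0,n]\to[0,1]$ I would define $\psi_{n+1}\colon[0,n+1]\to[0,1]$ by first using the map $[0,n+1]\to[0,n]$ that is the identity on $[0,n-1]$ and sends $[n-1,n+1]$ affinely onto $[n-1,n]$ (slope $1/2$, one breakpoint at the integer $n-1\in\Zz[1/2]$), then composing with $\psi_n$. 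Unwinding the induction shows each $\psi_n$ has the required form, and composing with the affine normalization from the previous step finishes the construction of $\varphi_I$.

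The only place where any care is needed is the bookkeeping of breakpoints and slopes under composition: one must check that at each composition the set of breakpoints stays finite and inside $\Zz[1/2]$, and that slopes stay in $2^\Zz$. This is routine — finiteness is clear, membership of breakpoints in $\Zz[1/2]$ follows because each elementary map has dyadic breakpoints and maps dyadic points to dyadic points, and the slope condition is multiplicative — so I expect no real obstacle; the main point is simply to organize the reduction cleanly so that everything follows from the closure of this class of maps under composition and inversion together with the two elementary building blocks. Alternatively, one can phrase the whole argument as a single explicit recursion, but the modular ``reduce to $[0,1]$'' formulation keeps the verification shortest.
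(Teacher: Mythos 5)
Your argument is correct. It does, however, organize the construction differently from the paper. You first reduce to producing a single normalizing map $\varphi_I\colon I\to[0,1]$ for each dyadic interval $I$ (using closure of the relevant class of maps under composition and inversion, which is legitimate: slopes multiply within $2^{\Zz}$, and breakpoints stay dyadic because each such map and its inverse preserve $\Zz[1/2]$), and then build $\varphi_I$ by an affine rescaling followed by an induction that repeatedly halves the last unit piece of $[0,n]$ into $[0,n-1]$. The paper instead argues directly between $I$ and $J$: it notes that a single linear map suffices when both lengths lie in $2^{\Zz}$, and in general decomposes $I$ and $J$ into finitely many subintervals of lengths in $2^{\Zz}$, refines the two decompositions to have the same number of pieces (possible since any interval of length in $2^{\Zz}$ splits into arbitrarily many such pieces), and maps piece to piece linearly. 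The two routes rest on the same underlying fact --- dyadic intervals can be subdivided into power-of-two-length pieces --- but yours trades the common-refinement bookkeeping for a reduction to a standard interval plus an explicit recursion, while the paper's version avoids any discussion of closure under composition and inversion. Either is a complete and acceptable proof; the only cosmetic point in yours is that the oddness of $q$ in the normalization $b-a=2^{-m}q$ plays no role.
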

\begin{proof}
By considering a linear map, the conclusion is immediate if the lengths of $I$ and $J$ belong to $2^\Zz$. In the general case, since $I$ and $J$ can be decomposed as a union of intervals with length in $2^\Zz$, and each interval with length in $2^\Zz$ can be decomposed as a union of an arbitrary number of intervals with length in $2^\Zz$, the result follows.
\end{proof}

\begin{definition}[{\cite[Definition 2.5]{DM14}}]\label{def:comp}
Let $G$ be a group acting on a locally compact Hausdorff space $X$ by homeomorphisms. Given $g\in G$, let $\supp g\coloneqq\overline{\{x\in X:gx\neq x\}}$. The action $G\acts X$ is said to be \emph{compressible} if there exists a basis $\mathcal{U}$ for the topology of $X$ such that:
\begin{enumerate}[\upshape(1)]
    \item for all $g\in G$, there exists $U\in\mathcal{U}$ such that $\supp g\subseteq U$;\label{c:1}
    \item for all $U_1,U_2\in \mathcal{U}$, there exists $g\in G$ such that $g(U_1)\subseteq U_2$;\label{c:2}
    \item for all $U_1,U_2,U_3\in\mathcal{U}$ with $\overline{U_1}\cap\overline{U_2}=\emptyset$, there exists $g\in G$ such that $g(U_1)\cap U_3=\emptyset$ and $\supp g\cap U_2=\emptyset$;\label{c:3}
    \item for all $U_1,U_2\in\mathcal{U}$, there exists $U_3\in\mathcal{U}_3$ such that $U_1\cup U_2 \subseteq U_3$.\label{c:4}
\end{enumerate}
\end{definition}

Given $x\in [0,1)$, let $T_x^0$ be the open stabilizer subgroup of $T$ consisting of elements that pointwise fix a neighborhood of $x$.  
\begin{lemma}\label{lem:simcom}
For every $x\in [0,1)$, the action $T_x^0\acts \frac{\Rz}{\Zz}\setminus\{x\}$ is compressible.
\end{lemma}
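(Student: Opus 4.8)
The plan is to exhibit an explicit basis $\mathcal U$ for the topology of $\frac{\Rz}{\Zz}\setminus\{x\}$ and verify the four conditions of Definition~\ref{def:comp}. Identifying $\frac{\Rz}{\Zz}\setminus\{x\}$ with the interval $(x,x+1)$ (opening the circle at $x$), the natural candidate for $\mathcal U$ is the collection of open subintervals $(a,b)\subseteq(x,x+1)$ whose endpoints $a,b$ lie in $\Zz[1/2]$, together with the ``end intervals'' $(x,b)$ and $(a,x+1)$ with $b,a\in\Zz[1/2]$; more precisely, since $x$ itself may be irrational, the members of $\mathcal U$ should be sets of the form $(a,b)$, $(x,b)$, $(a,x+1)$ with $a,b\in\Zz[1/2]$ (intersected with $(x,x+1)$ when $x$ is dyadic so that $x$ is genuinely excluded). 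This is clearly a basis. The point of working with dyadic endpoints is that Lemma~\ref{tecint} supplies piecewise linear homeomorphisms with breakpoints in $\Zz[1/2]$ and slopes in $2^\Zz$ between any two such intervals, and such maps extend by the identity to elements of $T^0_x$ provided they fix a neighborhood of $x$ (equivalently, a neighborhood of the two ends $x$ and $x+1$).

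For condition~\ref{c:1}: if $g\in T^0_x$, then $g$ fixes pointwise a neighborhood of $x$, so $\supp g$ is a compact subset of $(x,x+1)$, hence contained in some $(a,b)$ with $a,b\in\Zz[1/2]$, which is a member of $\mathcal U$. Condition~\ref{c:4} is trivial: given two members of $\mathcal U$, their union is contained in a single member $(a,b)$ (or $(x,b)$, or $(a,x+1)$) chosen with $a$ small enough and $b$ large enough. For condition~\ref{c:2}: given $U_1,U_2\in\mathcal U$, I want $g\in T^0_x$ with $g(U_1)\subseteq U_2$. Shrink $U_2$ to a slightly smaller dyadic interval $U_2'$ with $\overline{U_2'}\subseteq U_2$ and with both endpoints of $U_2'$ strictly inside $(x,x+1)$; then use Lemma~\ref{tecint} to build a PL homeomorphism $\overline{U_1}\to\overline{U_2'}$ with breakpoints in $\Zz[1/2]$ and dyadic slopes, arranged to be linear near the endpoints, and extend it by the identity on the complement. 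The resulting homeomorphism of the circle fixes a neighborhood of $x$ (namely a neighborhood of the ``ends''), hence lies in $T^0_x$; and $g(U_1)=U_2'\subseteq U_2$. (A small care point: if $U_1$ is itself an end interval like $(x,b)$, one only builds the map on a closed dyadic subinterval containing $\supp$, leaving a neighborhood of $x$ fixed; since $U_2'$ can be taken disjoint from the ends, the extension is genuinely the identity near $x$.)

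The main obstacle is condition~\ref{c:3}: given $U_1,U_2,U_3\in\mathcal U$ with $\overline{U_1}\cap\overline{U_2}=\emptyset$, find $g\in T^0_x$ with $g(U_1)\cap U_3=\emptyset$ and $\supp g\cap U_2=\emptyset$. The idea is to move $U_1$ entirely ``out of the way'' of $U_3$ without disturbing $U_2$. Concretely: since $\overline{U_1}$ and $\overline{U_2}$ are disjoint closed subsets of the circle minus $x$, they lie in disjoint dyadic intervals; choose a dyadic interval $V$ with $\overline{U_1}\subseteq V$, $\overline V\cap\overline{U_2}=\emptyset$, and $\overline V\subseteq(x,x+1)$. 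Inside $V$ we have room: pick a dyadic subinterval $W\subseteq V$ with $\overline W\cap\overline{U_3}=\emptyset$ if possible — but $U_3$ might overlap all of $V\setminus\overline{U_1}$. The robust fix is to subdivide: write $V$ as a union of two adjacent dyadic intervals $V=V'\cup V''$ with $\overline{U_1}\subseteq V'$, and use Lemma~\ref{tecint} to construct a PL homeomorphism of $\overline V$ onto itself, fixing $\partial V$, that compresses $V'$ (hence $\overline{U_1}$) into a tiny dyadic subinterval of $V$ chosen to avoid $\overline{U_3}$ (possible because, within the open interval $V$, $U_3\cap V$ is a proper open subset, so its complement in $V$ contains a nonempty open dyadic subinterval — here one uses that $U_3$ cannot be all of $V$ since $U_3$ is a single interval and $V$ has dyadic endpoints strictly inside $(x,x+1)$ whereas $U_3$'s closure, if it contained $\overline V$, would still leave the complement issue; if $U_3 \supseteq V$ one instead first enlarges $V$). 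Extending this map by the identity outside $V$ gives $g\in T^0_x$ with $\supp g\subseteq V$, so $\supp g\cap U_2=\emptyset$, and $g(U_1)\subseteq g(V')\subseteq$ the chosen tiny interval, which is disjoint from $U_3$. The delicate bookkeeping is ensuring at each stage that all intervals used have dyadic endpoints and stay strictly away from $x$, so that every map produced genuinely lies in $T^0_x$; this is routine but needs to be done carefully, and is where most of the writing goes. I would present the argument by reducing everything, via the identification of $\frac{\Rz}{\Zz}\setminus\{x\}$ with $(x,x+1)$, to the statement ``any two nondegenerate closed dyadic subintervals of $(x,x+1)$ are related by an element of $T^0_x$ carrying one onto the other, supported in any prescribed dyadic neighborhood of their union'' — which is exactly Lemma~\ref{tecint} plus extension by the identity — and then deduce \ref{c:1}--\ref{c:4} from this.
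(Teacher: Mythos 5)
There is a genuine gap, and it lies in your choice of basis rather than in the piecewise linear constructions. You include the ``end intervals'' $(x,b)$ and $(a,x+1)$ in $\mathcal U$, and for these sets conditions \ref{c:2}, \ref{c:3} and \ref{c:4} of Definition~\ref{def:comp} actually fail. Every $g\in T_x^0$ fixes a neighborhood of $x$ pointwise (and preserves orientation, since all slopes lie in $2^\Zz$), so $g((x,b))$ always contains points arbitrarily close to $x$ and can never be contained in an interval $(c,d)$ with $c>x$; thus \ref{c:2} fails whenever $U_1$ is an end interval and $U_2$ is not. Your parenthetical ``small care point'' establishes the opposite of what is needed: precisely because the extension is the identity near $x$, the image of an end interval still accumulates at $x$. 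Condition \ref{c:3} breaks similarly: take $U_3=(e,x+1)$, $U_2=(c,d)$ and $U_1=(a,b)$ with $e<c<d<a$; any $g$ with $\supp g\cap U_2=\emptyset$ fixes $(c,d)$ pointwise and fixes $x$, hence maps the arc $(d,x+1)$ into itself, forcing $g(U_1)\subseteq U_3$. And \ref{c:4} fails for $U_1=(x,b)$, $U_2=(a,x+1)$ with $a<b$, whose union is all of $\frac{\Rz}{\Zz}\setminus\{x\}$.

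The repair is simply to drop the end intervals: the collection $\{(a,b)+\Zz : a,b\in\Zz[1/2],\ x<a<b<x+1\}$ is already a basis for the topology of $\frac{\Rz}{\Zz}\setminus\{x\}$ by density of the dyadic rationals, and this is exactly the basis the paper uses. With that correction the rest of your argument --- compactness of $\supp g$ inside $(x,x+1)$ for \ref{c:1}, the three-piece map built from Lemma~\ref{tecint} and extended by the identity near $x$ for \ref{c:2}, and the idea of pushing $U_1$ out of the way of $U_3$ while keeping the support away from $U_2$ for \ref{c:3} --- matches the paper's proof in substance. Moreover, the delicate case you flag in \ref{c:3} (that $U_3$ might swallow the interval you work in) disappears once end intervals are excluded: $\overline{U_3}=[e,f]$ is then bounded away from the ends, so one can always compress $U_1$ into the region beyond $\max\{b,f\}$, which is the paper's explicit construction.
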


\begin{proof}
We will show that conditions \ref{c:1}-\ref{c:4} from Definition \ref{def:comp} hold with respect to the basis $\mathcal{U}\coloneqq\{(a,b)+\mathbb{Z}:a,b\in \Zz[1/2],x<a<b<x+1\}$ for $\frac{\Rz}{\Zz}\setminus\{x\}$.

\ref{c:1} and \ref{c:4} follow from the definition of $T_x^0$ and density of $\Zz[1/2]$ in $\Rz$.

\ref{c:2}. Let $U_1=(a,b)$ and $U_2=(c,d)$ be elements of $\mathcal{U}$. Take $y_1,y_2\in\Zz[1/2]$ such that
\[x<y_1<\min\{a,c\} <\max\{b,d\}<y_2<x+1.\]
By Lemma \ref{tecint}, there exist piecewise linear bijections $\varphi_1\colon[y_1,a]\to[y_1,c]$, $\varphi_2\colon[a,b]\to[c,d]$ and $\varphi_3\colon[b,y_2]\to[d,y_2]$ with finitely many breakpoints in $\Zz[1/2]$, and slopes in $2^{\Zz}$. Let $g\colon\frac{\Rz}{\Zz}\to \frac{\Rz}{\Zz}$ be given by
\begin{align*}
g(w)\coloneqq\begin{cases}
w,&\text{if $w\in [x,y_1)$},\\
\varphi_1(w),&\text{if $w\in[y_1,a)$},\\
\varphi_2(w),&\text{if $w\in[a,b)$},\\
\varphi_3(w),&\text{if $w\in[b,y_2)$},\\
w,&\text{if $w\in[y_2,x+1)$}.
\end{cases}
\end{align*}
Then $g\in T_x^0$ and $g(U_1)\subset U_2$.

\ref{c:3}. Let $U_1=(a,b)$, $U_2=(c,d)$ and $U_3=(e,f)$ be elements of $\mathcal{U}$ such that $\overline{U_1}\cap\overline{U_2}=\emptyset$. Suppose first that $d<a$. Take $y_1,y_2\in\Zz[1/2]$ such that
\[
d<y_1<a<\max\{b,f\}<y_2<x+1.
\]
By Lemma \ref{tecint}, there exist piecewise linear bijections \[\varphi_1\colon[y_1,a]\to[y_1,\max\{b,f\}]\] and \[\varphi_2\colon[a,y_2]\to[\max\{b,f\},y_2]\] with finitely many breakpoints in $\Zz[1/2]$, and slopes in $2^{\Zz}$. Let $g\colon\frac{\Rz}{\Zz}\to \frac{\Rz}{\Zz}$ be given by
\begin{align*}
g(w)\coloneqq\begin{cases}
w,&\text{if $w\in [x,y_1)$},\\
\varphi_1(w),&\text{if $w\in[y_1,a)$},\\
\varphi_2(w),&\text{if $w\in[a,y_2)$},\\
w,&\text{if $w\in[y_2,x+1)$}.
\end{cases}
\end{align*}
Then $g\in T_x^0$, $g(U_1)\cap U_3=\emptyset$ and $\supp g\cap U_2=\emptyset$. The case where $b<c$ is handled in a similar way.
\end{proof}

We will also need the following result.
\begin{lemma}\label{lem:sim}
For every $x\in [0,1)\setminus\Omega$, the group $T_x^0$ is simple.

\end{lemma}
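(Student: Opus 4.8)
The plan is to combine the compressibility result from Lemma~\ref{lem:simcom} with a general simplicity criterion for groups acting compressibly, following the strategy of Dudko and Medynets. The key point is that for $x \in [0,1) \setminus \Omega$, the group $T_x^0$ consists of \emph{all} elements of $T$ fixing a neighborhood of $x$, and since $x$ is irrational, every such element actually fixes pointwise a \emph{dyadic} interval containing $x$; this makes the action $T_x^0 \curvearrowright \frac{\Rz}{\Zz} \setminus \{x\}$ not only compressible but also such that every group element has support contained in a proper subinterval, i.e. the action is by homeomorphisms with full support control.

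First I would recall (or quote from \cite{DM14}) the general fact that if a group $G$ acts faithfully and compressibly on a locally compact Hausdorff space $X$, then the commutator subgroup $[G,G]$ is simple, and moreover $G$ is simple as soon as $G = [G,G]$. So the proof reduces to two tasks: (i) verify that the action $T_x^0 \curvearrowright \frac{\Rz}{\Zz}\setminus\{x\}$ is faithful --- which is clear, since a homeomorphism of the circle fixing everything outside a point is the identity --- and (ii) show that $T_x^0$ is perfect, i.e. $T_x^0 = [T_x^0, T_x^0]$. For (ii) the natural approach is a commutator-expression argument in the spirit of Thompson and Higman: given $g \in T_x^0$, choose a dyadic interval $J$ with $\supp g \subseteq J \subsetneq \frac{\Rz}{\Zz}\setminus\{x\}$, use Lemma~\ref{tecint} and condition \ref{c:2} of compressibility to find $h \in T_x^0$ with $h(J) \cap J = \emptyset$ (possible since $J$ is contained in a dyadic interval avoiding $x$, and there is room), and then exploit the standard trick that an element supported on $J$ together with a ``displacement'' element $h$ can be written as a product of commutators --- e.g. using $g = [\,g, h\,]$-type identities when $\supp g$ and $h(\supp g)$ are disjoint, or the infinite-swindle / Higman-type argument packaging $g$ into a product where the displaced copies telescope.

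The cleanest route for (ii) is probably this: since $T_x^0$ contains, for each dyadic interval $J$ avoiding $x$, an isomorphic copy of Thompson's group $V$ (or $F$) supported on $J$ --- via Lemma~\ref{tecint}, piecewise-linear dyadic homeomorphisms of $J$ extended by the identity --- and since these local groups are known to be perfect or have perfect commutator subgroups, and since by compressibility (condition \ref{c:2}) any element can be conjugated into such a local copy, one transfers perfectness from the local groups to $T_x^0$. More directly, one can cite that groups acting compressibly have $[G,G]$ simple and that here $[T_x^0,T_x^0] = T_x^0$ because $T_x^0$ has no nontrivial abelian (indeed no nontrivial normal proper) quotient: any proper quotient would have to kill $[T_x^0,T_x^0]$, but the simplicity of the commutator subgroup together with the compressibility-driven fact that $T_x^0$ is generated by elements supported on arbitrarily small dyadic intervals forces $[T_x^0,T_x^0]$ to be everything.

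\textbf{Main obstacle.} The genuinely delicate point is establishing perfectness, $T_x^0 = [T_x^0,T_x^0]$; compressibility alone gives simplicity of the commutator subgroup but not that it is the whole group, so this requires an extra input specific to Thompson-type groups --- either the abelianization computation for the relevant local copies of $F$ (whose abelianization is $\Zz^2$, so one must instead use that the \emph{support-shrinking} elements live in $[F,F]$, which is exactly the subgroup of elements fixing a neighborhood of an endpoint, and these generate), or a direct commutator-factorization using the disjointness provided by condition \ref{c:3}. I would expect the write-up to hinge on the observation that $T_x^0$ is generated by its subgroups of the form ``elements supported in a dyadic interval $J$ with $\overline{J}$ avoiding $x$'', each of which is isomorphic to a copy of $[F,F] \rtimes$-type simple group, hence perfect, so the whole group is perfect.
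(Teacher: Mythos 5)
There are two genuine gaps in your plan. The ``general fact'' you lean on --- that a faithful compressible action forces $[G,G]$ to be simple --- is not what \cite{DM14} provides: there (and in the way it is used in this paper, in the proof of Theorem~\ref{thm:th}) compressibility is combined with simplicity of the group as a \emph{hypothesis} in order to control characters; it is not itself a simplicity criterion. You would need either to prove a Higman/Epstein-type criterion from the axioms of Definition~\ref{def:comp} or to locate a correct reference. Even granting that, the second half of your argument --- perfectness of $T_x^0$ --- is the step you yourself flag as delicate, and it is only gestured at (``I would expect the write-up to hinge on\dots'') rather than carried out. As written, neither half is complete.

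The irony is that the observation buried in your ``cleanest route'' already proves the lemma outright, with no compressibility and no perfectness discussion. Since $x\notin\Omega$ and $\Zz[1/2]$ is dense, every $g\in T_x^0$ fixes pointwise a neighborhood of some closed dyadic arc $[y,z]$ containing $x$. The group $T_{[y,z]}$ of all such elements is conjugate, via a piecewise linear dyadic homeomorphism $[0,1)\to[z,y+1)$ supplied by Lemma~\ref{tecint} (extended by the identity on $[y,z)$), to the subgroup of $T$ consisting of elements fixing a neighborhood of $0$ --- which is exactly $[F,F]$ by \cite[Theorem 4.1]{CFP96}, and which is simple by \cite[Theorem 4.5]{CFP96}. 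Taking dyadic $y_n\uparrow x$ and $z_n\downarrow x$ exhibits $T_x^0$ as an increasing union of these simple subgroups, and an increasing union of simple groups is simple. This is precisely the paper's proof. Your ``local copies'' are the right objects, but you stop at ``each is perfect'' and then route through an unestablished simplicity criterion, when ``each is simple and they exhaust $T_x^0$ as an increasing chain'' finishes the argument immediately.
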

\begin{proof}
Take sequences of dyadic rationals $(y_n),(z_n)\subseteq[0,1)$ such that $y_n\uparrow x$ and $z_n\downarrow x$. Let $T_n$ be the subgroup of $T$ consisting of elements which pointwise fix a neighborhood of the interval $[y_n,z_n]$. 

Given $n\in\Nz$, let $\varphi_n\colon[0,1)\to [z_n,y_n+1)$ be a piecewise linear map with finitely many breakpoints in $\Omega$, and slopes in $2^\Zz$. Given $g\in [F,F]$, let $\tilde{g}\in T_n$ be given by 
\begin{align*}
\tilde{g}(w)=\begin{cases}w,&\text{if $w\in [y_n,z_n)$},\\
(\varphi_n\circ g\circ\varphi_n^{-1})(w),&\text{if $w\in[z_n,y_n+1)$}.
\end{cases}
\end{align*}
Then, the map $g\mapsto \tilde{g}$ is an isomorphism between $[F,F]$ and $T_n$, whose inverse takes $h\in T_n$, and maps it into $\varphi_n^{-1}\circ h\circ\varphi_n$.
Since $[F,F]$ is simple (\cite[Theorem 4.5]{CFP96}), we conclude that $T_x^0=\bigcup_n T_n$ is simple.
\end{proof}
Given a $C^*$-algebra $A$, we denote by $T(A)$ the set of tracial states on $A$, and by $\partial T(A)$ the set of extreme points of $T(A)$.
\begin{remark}\label{rem:ts}
Recall that a tracial state on a unital $C^*$-algebra is extremal if and only if its GNS representation is factorial (see the proof of \cite[Proposition 11.C.3]{BdlH20}, or \cite[Proposition 1]{HM08}). In particular, if $\pi\colon B\to A$ is a surjective $*$-homomorphism between unital $C^*$-algebras, and $\tau\in\partial T(A)$, then $\tau\circ\pi\in\partial T(B)$.
\end{remark}
\begin{theorem}\label{thm:th}
 The reduced C*-algebra of $T$ has a unique maximal ideal.
\end{theorem}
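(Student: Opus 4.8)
The plan is to apply Theorem~\ref{thm:mi} and then invoke the Dudko--Medynets classification of extreme characters, the bridge between the two being the observation that the intertwining state produced in the proof of Theorem~\ref{thm:mi} restricts to a \emph{trace} on the stabilizer algebra. If Thompson's group $F$ is non-amenable, then $T$ is C*-simple by \cite{LBMB18}, so $C^*_r(T)$ is simple and $\{0\}$ is its unique maximal ideal. Assume henceforth that $F$ is amenable. By the description of the amenable uniformly recurrent subgroup $\mathcal{A}_T$ of $T$ due to Le Boudec and Matte Bon \cite{LBMB18}, one may fix $x\in\partial_F T$ for which $T_x$ is conjugate to one of the rigid point-stabilizers $T_\xi^0$, $\xi\in\frac{\Rz}{\Zz}$; this group is amenable, it is simple (by Lemma~\ref{lem:sim} when $\xi\notin\Omega$, and because $T_\xi^0\cong[F,F]$ is simple \cite{CFP96} when $\xi\in\Omega$), and its action on $\frac{\Rz}{\Zz}\setminus\{\xi\}$ is compressible by Lemma~\ref{lem:simcom}. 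Hence, by the Dudko--Medynets classification \cite{DM14} --- whose proof rests precisely on the presence of such a compressible action --- the only characters of $T_x$ are the trivial character and the regular character $g\mapsto\delta_{g,e}$, so $T(C^*(T_x))$ is the segment with endpoints $\tau_{\mathrm{triv}}$ and $\tau_{\mathrm{reg}}$; their GNS-kernels are, respectively, the augmentation ideal $I_0:=\ker(C^*(T_x)\twoheadrightarrow\Cz)$ and $\{0\}$ (the latter since $T_x$ is amenable).

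Now let $J$ be an arbitrary maximal ideal of $C^*_r(T)$. Running the construction in the proof of Theorem~\ref{thm:mi} produces a state $\sigma$ on $C^*_r(T)$ with $\sigma|_J=0$ and $\sigma(\lambda_g)=0$ for $g\notin T_x$, and, with $K:=\{a\in C^*(T_x):\sigma(bac)=0\text{ for all }b,c\in C^*(T_x)\}$, one has $J\subseteq\Ind_{T_x}(K)$. The crucial point is that $\sigma=\mathrm{Ev}_x\circ\varphi\circ\pi$ with $\varphi\circ\pi$ a $T$-equivariant map and $x$ a fixed point of $T_x$, so that $\sigma|_{C^*(T_x)}$ is invariant under conjugation by $T_x$; since a conjugation-invariant state on a group C*-algebra is a trace, $\sigma|_{C^*(T_x)}\in T(C^*(T_x))$, and therefore $K$, being its GNS-kernel, equals $I_0$ or $\{0\}$ by the previous paragraph. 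If $K=\{0\}$, then $J\subseteq\Ind_{T_x}(\{0\})=\{0\}$ (the representation induced from the faithful regular representation of $T_x$ is the regular representation of $T$), so $\{0\}$ is a maximal ideal and $C^*_r(T)$ is simple, contradicting the non-simplicity of $C^*_r(T)$ when $F$ is amenable (\cite{HO17},\cite{LBMB18}). Hence $K=I_0$, so $J\subseteq\Ind_{T_x}(I_0)$; the latter ideal is proper by part~\ref{i:7} of Proposition~\ref{prop:Galois}, so maximality of $J$ forces $J=\Ind_{T_x}(I_0)$. As $C^*_r(T)$ is unital it has at least one maximal ideal, and we have just shown every maximal ideal equals $\Ind_{T_x}(I_0)=\ker(\lambda_{T/T_x})$; thus $C^*_r(T)$ has a unique maximal ideal.

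The main obstacle is conceptual rather than computational: one must (i) identify, via \cite{LBMB18}, the boundary stabilizers $T_x$ with the \emph{simple} groups $T_\xi^0$ --- were $T_x$ instead isomorphic to $F$, the same scheme would only set up a bijection between the maximal ideals of $C^*_r(T)$ and the characters of $F$, of which there are many by \cite{DM14} --- and (ii) notice that the intertwining state of Theorem~\ref{thm:mi}, being invariant under conjugation by $T_x$, restricts to a trace on $C^*(T_x)$, which is exactly what lets the Dudko--Medynets classification (through Lemmas~\ref{lem:simcom} and~\ref{lem:sim}) be brought to bear. Everything else is a direct application of Theorem~\ref{thm:mi} and the Galois-connection formalism of Section~2.
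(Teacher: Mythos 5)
Your proof is correct, and its skeleton --- reduce to a boundary stabilizer via Theorem~\ref{thm:mi}, then pin down the relevant ideal of the stabilizer algebra using the Dudko--Medynets classification of characters through Lemmas~\ref{lem:simcom} and~\ref{lem:sim} --- is the same as the paper's. The one genuine difference lies in how the trace on $C^*(T_x)$ is produced. You re-open the proof of Theorem~\ref{thm:mi} and observe that the state $\sigma=\mathrm{Ev}_x\circ\varphi\circ\pi$ is $\mathrm{Ad}(\lambda_g)$-invariant for $g\in T_x$ (since $\varphi\circ\pi$ is a $T$-map and $x$ is fixed by $T_x$), hence restricts to a character of $T_x$ whose GNS kernel is exactly your $K$; you then invoke Krein--Milman to see that \emph{every} trace, not just the extreme ones, has GNS kernel $I_0$ or $\{0\}$, and you discard the case $K=\{0\}$ by quoting non-simplicity of $C^*_r(T)$ from \cite{HO17} and \cite{LBMB18}. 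The paper instead uses Theorem~\ref{thm:mi} purely as a black box: it takes the maximal ideal $J$ of $C^*(T_x^0)$ with $I=\Ind_{T_x^0}(J)$, produces a tracial state on the quotient $C^*(T_x^0)/J$ from amenability of $T_x^0$ (a fixed point for the action on the state space), chooses it extreme, and pulls it back to an extreme character of $T_x^0$ via Remark~\ref{rem:ts}, so that \cite[Theorem 2.9]{DM14} applies directly and maximality of $J$ finishes the argument. Your conjugation-invariance observation is a nice shortcut that avoids both the fixed-point argument and Remark~\ref{rem:ts}, at the cost of depending on the internal details of the proof of Theorem~\ref{thm:mi} rather than on its statement; both routes are sound.
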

\begin{proof}
If $F$ is not amenable, then $T$ is C*-simple by \cite[Corollary 4.2]{LBMB18}, so that $C^*_r(T)$ has a unique maximal ideal. 

Assume that $F$ is amenable and fix $x \in [0,1)\setminus\Omega$. 
It follows from \cite[Theorem 4.11]{LBMB18} that $T_x^0$ is the stabilizer of a point in the Furstenberg boundary $\partial_F T$ and, in particular, $T_x^0$ is amenable.

Given a maximal ideal $I$ in $C^*_r(T)$, use Theorem \ref{thm:mi} to find a maximal ideal $J$ of $C^*(T_x^0)$ such that $I=\Ind_{T_x^0}(J)$, and let $\pi\colon C^*(T_x^0)\to C^*(T_x^0)/J$ be the canonical quotient map.  

By considering the action of the amenable group $T_x^0$ on the state space of $C^*(T_x^0)/J$, we obtain that $T(C^*(T_x^0)/J)\neq\emptyset$. Let $\tau\in\partial T(C^*(T_x^0)/J)$. By Remark \ref{rem:ts}, $\tau\circ\pi\in \partial T(C^*(T_x^0))$. By Lemmas \ref{lem:simcom} and \ref{lem:sim}, and \cite[Theorem 2.9]{DM14}, $\tau\circ\pi$ is the character coming from the trivial representation of $C^*(T_x^0)$. Since $J$ is maximal, we conclude that $J$ is the kernel of the trivial representation.
\end{proof}

Given a unitary representation $\pi$ of a group $G$, let $C^*_\pi(G)$ denote the C*-algebra generated by the image of $\pi$. 
Recall that, if $\sigma$ is another unitary representation of $G$, then $\pi$ is \emph{weakly contained} in $\sigma$ (denoted by $\pi\prec\sigma$), if there is a (necessarily surjective) $*$-homomorphism $C^*_\sigma(G)\to C^*_\pi(G)$ which, for each $g\in G$, maps $\sigma_g$ to $\pi_g$.

Given a subgroup $H \leq G$, let $\lambda_{G/H}\colon G\to B(\ell^2(G/H))$ be the \emph{quasi-regular representation} given by $\lambda_{G/H}(g)\delta_{kH}=\delta_{gkH}$, for all $g\in G$ and $kH\in G/H$. Recall that $H$ is amenable if and only if $\lambda_{G/H}$ is weakly contained in $\lambda_G$. Moreover, recall that $H$ is \emph{co-amenable} in $G$ if there is a $G$-invariant mean on $\ell^\infty(G/H)$. Note that every subgroup of an amenable group $G$ is co-amenable in $G$.

\begin{remark}\label{rem:tf}
The C*-algebra $C^*_{\lambda_{T/F}}(T)$ is simple and does not have any tracial states by \cite[Theorem 6.1]{KS22}. 
It follows from Theorem \ref{thm:th} and the fact that $C^*_{\lambda_{T/F}}(T)$ is simple that, if $F$ is amenable, then the unique nonzero simple quotient of $C^*_r(T)$ is $C^*_{\lambda_{T/F}}(T)$.
\end{remark}

The next result strengthens the fact shown in \cite[Theorem 5.5]{HO17} that, if $F$ is amenable, then $C^*_r(T)$ is not simple. The proof is an adaptation of an idea from \cite[Example 6.4]{KS22}. Also compare this result with \cite{GM24}, where, for certain groups $G$, quasi-regular representations are used to produce uncountably many ideals of $C^*(G)$ contained in the kernel of the canonical quotient $C^*(G)\to C^*_r(G)$.

\begin{theorem}\label{thm:infinite-ideals}
    If $F$ is amenable, then $C^*_r(T)$ has infinitely many ideals.
\end{theorem}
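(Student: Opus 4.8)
The plan is to produce a strictly decreasing sequence $I_1\supsetneq I_2\supsetneq\cdots$ of ideals of $C^*_r(T)$, realized as kernels of quasi-regular representations. Fix distinct points $x_1,x_2,\ldots\in[0,1)\setminus\Omega$ and put $H_n\coloneqq\bigcap_{i=1}^n T_{x_i}^0\leq T$. As recalled in the proof of Theorem~\ref{thm:th}, amenability of $F$ forces $T_{x_1}^0$ to be amenable, so each $H_n$ (a subgroup of $T_{x_1}^0$) is amenable, and hence $\lambda_{T/H_n}\prec\lambda_T$. Let $I_n$ be the kernel of the resulting surjection $C^*_r(T)\twoheadrightarrow C^*_{\lambda_{T/H_n}}(T)$; this is a genuine ideal of $C^*_r(T)$, and it is proper since $\lambda_{T/H_n}\neq 0$. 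For each $n$, the points $x_1,\dots,x_{n+1}$ being distinct, we may pick an arc with dyadic endpoints around $x_{n+1}$ whose closure avoids $x_1,\dots,x_n$; by Lemma~\ref{tecint} there is a nontrivial element of $T$ supported in that arc and not fixing any neighborhood of $x_{n+1}$, and such an element lies in $H_n\setminus T_{x_{n+1}}^0$. Thus $H_{n+1}\subsetneq H_n$ for all $n$.

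To see that the chain is decreasing, note that since $H_n$ is amenable the trivial representation $1_{H_n}$ is weakly contained in $\lambda_{H_n/H_{n+1}}$ (equivalently, $H_{n+1}$ is co-amenable in $H_n$); inducing up to $T$ and using induction in stages together with monotonicity of induction gives $\lambda_{T/H_n}=\mathrm{Ind}_{H_n}^T 1_{H_n}\prec\mathrm{Ind}_{H_n}^T\lambda_{H_n/H_{n+1}}=\lambda_{T/H_{n+1}}$, so $I_{n+1}\subseteq I_n$.

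The crux, and the step I expect to be the main obstacle, is strictness. Suppose $I_{n+1}=I_n$. Then $\lambda_{T/H_n}$ and $\lambda_{T/H_{n+1}}$ are weakly equivalent, so $\lambda_{T/H_{n+1}}\prec\lambda_{T/H_n}$ and, restricting to $H_n$, $\lambda_{T/H_{n+1}}|_{H_n}\prec\lambda_{T/H_n}|_{H_n}$. On one hand $\lambda_{H_n/H_{n+1}}$ is a subrepresentation of $\lambda_{T/H_{n+1}}|_{H_n}$, namely on the $H_n$-invariant subspace of $\ell^2(T/H_{n+1})$ spanned by the cosets in $H_n/H_{n+1}$. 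On the other hand, Mackey's decomposition gives $\lambda_{T/H_n}|_{H_n}=\bigoplus_{H_ngH_n}\lambda_{H_n/(H_n\cap gH_ng^{-1})}$, whose $g=e$ summand is $1_{H_n}$ and each of whose summands weakly contains $1_{H_n}$, because $H_n\cap gH_ng^{-1}$ is co-amenable in the amenable group $H_n$; hence $\lambda_{T/H_n}|_{H_n}$ is weakly equivalent to $1_{H_n}$. Combining, $\lambda_{H_n/H_{n+1}}\prec 1_{H_n}$; but weak containment in the trivial representation forces $\lambda_{H_n/H_{n+1}}(g)=\mathrm{Id}$ for every $g\in H_n$, i.e. $H_n$ acts trivially on $H_n/H_{n+1}$, whence $H_n\subseteq H_{n+1}$ — contradicting $H_{n+1}\subsetneq H_n$. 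Therefore $I_1\supsetneq I_2\supsetneq\cdots$ is a strictly decreasing chain of ideals, so $C^*_r(T)$ has infinitely many ideals. (With more work one might replace the $x_i$ by points in pairwise distinct $T$-orbits and hope to obtain uncountably many ideals, in the spirit of \cite{GM24}, but the chain above already gives the statement.)
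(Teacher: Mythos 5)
Your construction of the chain is sound up to and including the monotonicity $I_{n+1}\subseteq I_n$: amenability of $H_n$, properness of the kernels, the strict inclusions $H_{n+1}\subsetneq H_n$, and the induction-in-stages argument all parallel what the paper does with its subgroups $F_n$. The gap is exactly where you anticipated it, in the strictness step, and it is fatal as written. The inference ``each Mackey summand $\lambda_{H_n/(H_n\cap gH_ng^{-1})}$ weakly contains $1_{H_n}$, hence $\lambda_{T/H_n}|_{H_n}$ is weakly equivalent to $1_{H_n}$'' only yields the containment $1_{H_n}\prec\lambda_{T/H_n}|_{H_n}$, whereas your argument needs the opposite containment $\lambda_{T/H_n}|_{H_n}\prec 1_{H_n}$. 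That containment is false: by your own (correct) observation, a representation weakly contained in $1_{H_n}$ must act by the identity, so $\lambda_{H_n/(H_n\cap gH_ng^{-1})}\prec 1_{H_n}$ would force $H_n\cap gH_ng^{-1}=H_n$ for every $g\in T$, i.e.\ $H_n$ would be contained in its normal core $\bigcap_{g\in T}gH_ng^{-1}=\bigcap_{y\in Tx_1\cup\cdots\cup Tx_n}T_y^0$. Since $T$ acts minimally on $\frac{\mathbb{R}}{\mathbb{Z}}$, an element of this core fixes a neighborhood of every point of a dense set and is therefore trivial, while $H_n$ is nontrivial. So the contradiction you derive never materializes.

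More structurally, restricting to the amenable subgroup $H_n$ is unlikely to detect the difference between the two kernels: since $H_n$ is amenable, $\lambda_{T/H_{n+1}}|_{H_n}$ is weakly contained in $\lambda_{H_n}$, while $\lambda_{T/H_n}|_{H_n}$ contains large quasi-regular summands $\lambda_{H_n/(H_n\cap gH_ng^{-1})}$ rather than only copies of $1_{H_n}$, so the restricted weak containment carries little obstruction. The paper avoids restriction altogether and works inside $C^*_{\lambda_{T/F_m}}(T)$ directly: it applies the formula of \cite[Proposition 6.3]{KS22} for $\lambda_{T/F_m}(gh)-\lambda_{T/F_m}(g)-\lambda_{T/F_m}(h)$ to elements $g,h$ whose supports are disjoint for the action on $T/F_m$ but not for the action on $T/F_n$, obtaining an explicit identity that holds in one C*-algebra and fails in the other, which rules out the would-be surjection. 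If you wish to salvage your chain $H_n=\bigcap_{i\leq n}T^0_{x_i}$, you would need an obstruction of this kind (e.g.\ elements supported near $x_{n+1}$ and away from $x_1,\dots,x_n$ whose supports are disjoint on $T/H_n$ but not on $T/H_{n+1}$); the restriction argument cannot supply it.
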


\begin{proof}
Given $g\in T$, let $D_+(g)\colon\Omega\to\Zz$ be given by $D_+(g)(x)=n$ if $\lim_{y\to x^+}g'(y)=2^n$, for all $x\in\Omega$. 
Define $D_-$ analogously, so that $D_+$ and $D_-$ are the maps obtained by taking right and left derivatives, respectively, followed by $\log_2$. 
Note that, if $g,h\in T$ and $x\in\Omega$, then $D_\pm(gh)(x)=D_\pm(g)(h(x))+D_\pm(h)(x)$. 

Given $n\in \Zz_{>0}$, consider the action of $T$ on $\Omega\times (\Zz_n)^2$ given by
\[
g(x,i,j) = \big(g(x),i+D_-(g)(x),j+D_+(g)(x)\big),
\]
for all $g\in T$, and $(x,i,j)\in \Omega\times(\Zz_n)^2$.
This action is transitive and the stabilizer of $(0,0,0)$ is $F_n\coloneqq\{g\in F:D_\pm(g)(0)\in n\Zz\}$. Therefore, $\Omega\times(\Zz_n)^2\cong T/F_n$.
 
Assume $F$ is amenable. Then, each $F_n$ is amenable, so that $\lambda_{T/F_{n}}\prec\lambda_T$ for all $n$. Given $m,n\in\Zz_{>0}$ such that $m|n$ and $m<n$, we have that $F_n\leq F_m$. Since $F_m$ is amenable, $F_n$ is co-amenable in $F_m$, so that, by \cite[Proposition 2.3]{KS22}, we have $\lambda_{T/F_{m}}\prec\lambda_{T/F_{n}}$.
 
We claim that $\lambda_{T/F_{m}}\nsucc\lambda_{T/F_{n}}$. Take $g,h\in F$ such that 
\begin{enumerate}
\item $g(x)=x$ for $x\leq 1/2$ and $D_+(g)(1/2)=m$, 
\item$h(x)=x$ for $x\geq 1/2$, $D_-(h)(1/2)=m$. 
\end{enumerate}
Note that the supports of $g$ and $h$ are disjoint with respect to the action of $T$ on $\Omega\times(\Zz_m)^2\cong T/F_m$, but not with respect to the action of $T$ on $\Omega\times(\Zz_n)^2\cong T/F_n$. By \cite[Proposition 6.3]{KS22}, 
\[
\lambda_{T/F_{m}}(gh)-\lambda_{T/F_{m}}(g)-\lambda_{T/F_{m}}(h)=1\neq \lambda_{T/F_{n}}(gh)-\lambda_{T/F_{n}}(g)-\lambda_{T/F_{n}}(h).
\]
In particular, there cannot be a $*$-homomorphism $C^*_{\lambda_{T/F_m}}(T)\to C^*_{\lambda_{T/F_n}}(T)$ which, for each $t\in T$, maps $\lambda_{T/F_m}(t)$ to $\lambda_{T/F_n}(t)$. This shows the claim.  
\end{proof}
Theorems \ref{thm:th} and \ref{thm:infinite-ideals}, together with Remark \ref{rem:tf}, \cite[Theorem 5.5]{HO17} and \cite[Corollary 4.2]{LBMB18}, yield the following characterizations of the amenability of $F$ in terms of the ideal structure of $C^*_r(T)$. 
\begin{corollary}[{\cite{HO17},\cite{LBMB18},\cite{KS22}}]\label{cor:is}
    The following statements are equivalent:
    \begin{enumerate}[\upshape(1)]
        \item Thompson's group $F$ is amenable;
        \item Thompson's group $T$ is not $C^*$-simple;
        \item the unique maximal ideal of $C^*_r(T)$ is nonzero;
        \item the unique nonzero simple quotient of $C^*_r(T)$ is traceless;
        \item $C^*_r(T)$ has infinitely many ideals.
    \end{enumerate}
\end{corollary}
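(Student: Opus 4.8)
The plan is to assemble the corollary entirely from results already established, with no new argument required. The key inputs are Theorem~\ref{thm:th} (which supplies a unique maximal ideal $M$ of $C^*_r(T)$, and thereby makes the phrase ``the unique nonzero simple quotient'' in $(4)$ unambiguous, since any such quotient must equal $C^*_r(T)/M$), Theorem~\ref{thm:infinite-ideals}, Remark~\ref{rem:tf}, together with the external facts \cite[Theorem 5.5]{HO17}, \cite[Corollary 4.2]{LBMB18}, and \cite[Theorem 6.1]{KS22}. Concretely I would prove the chain $(1)\Rightarrow(2)\Rightarrow(3)$ and its reversals, and $(1)\Leftrightarrow(5)$, in a bookkeeping order that keeps each implication available when later used.

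First I would fix the unique maximal ideal $M$ of $C^*_r(T)$ from Theorem~\ref{thm:th}; then $C^*_r(T)$ is simple precisely when $M=\{0\}$, which gives $(2)\Leftrightarrow(3)$ because $T$ is $C^*$-simple iff $C^*_r(T)$ is simple. For $(1)\Leftrightarrow(2)$ the forward direction is \cite[Theorem 5.5]{HO17} and the converse is the contrapositive of \cite[Corollary 4.2]{LBMB18}. Then $(1)\Rightarrow(5)$ is exactly Theorem~\ref{thm:infinite-ideals}, while $(5)\Rightarrow(2)$ is immediate since having infinitely many ideals rules out simplicity of $C^*_r(T)$. For $(3)\Rightarrow(4)$ I would note that $(3)$ already forces $F$ amenable (via $(3)\Rightarrow(2)\Rightarrow(1)$, which is in place by this point), so Remark~\ref{rem:tf} identifies the unique nonzero simple quotient $C^*_r(T)/M$ with $C^*_{\lambda_{T/F}}(T)$, which is traceless by \cite[Theorem 6.1]{KS22}. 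Finally, for $(4)\Rightarrow(3)$ I would argue by contraposition: if $M=\{0\}$ then $C^*_r(T)$ is its own unique nonzero simple quotient, and it carries the canonical tracial state $\lambda_g\mapsto\delta_{g,e}$, so $(4)$ fails.

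I do not expect any genuine obstacle here: the whole proof is a matter of combining the cited statements. The only point deserving a moment's care is the well-posedness of $(4)$ — that a unique nonzero simple quotient always exists (it is $C^*_r(T)/M$, which equals $C^*_r(T)$ itself when $M=\{0\}$), which is precisely what Theorem~\ref{thm:th} provides — and, relatedly, sequencing the implications so that the equivalences $(1)\Leftrightarrow(2)\Leftrightarrow(3)$ are already available before Remark~\ref{rem:tf} is invoked in the step $(3)\Rightarrow(4)$.
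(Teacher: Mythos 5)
Your proposal is correct and follows exactly the route the paper intends: the paper itself gives no written proof beyond the sentence asserting that Theorems~\ref{thm:th} and~\ref{thm:infinite-ideals}, Remark~\ref{rem:tf}, \cite{HO17}, \cite{LBMB18} and \cite{KS22} combine to give the corollary, and your bookkeeping of the implications (including the well-posedness of item (4) and the contrapositive for $(4)\Rightarrow(3)$ via the canonical trace) is precisely the intended assembly.
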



\begin{thebibliography}{99}

\bib{B91}{article}{
 Author = {B{\'e}dos, Erik},
 Title = {Discrete groups and simple {{\(C^*\)}}-algebras},
 FJournal = {Mathematical Proceedings of the Cambridge Philosophical Society},
 Journal = {Math. Proc. Camb. Philos. Soc.},
 ISSN = {0305-0041},
 Volume = {109},
 Number = {3},
 Pages = {521--537},
 Year = {1991},
}

\bib{BdlH20}{book}{
 Author = {Bekka, Bachir},author={de la Harpe, Pierre},
 Title = {Unitary representations of groups, duals, and characters},
 FSeries = {Mathematical Surveys and Monographs},
 Series = {Math. Surv. Monogr.},
 ISSN = {0076-5376},
 Volume = {250},
 ISBN = {978-1-4704-5627-6; 978-1-4704-6287-1},
 Year = {2020},
 Publisher = {Providence, RI: American Mathematical Society (AMS)},
}

\bibitem[BKKO17]{BKKO17} E. Breuillard, M. Kalantar, M. Kennedy, and N. Ozawa, \emph{C*-simplicity and the unique trace property for discrete groups}, Publ. Math. Inst. Hautes \'{E}tudes Sci. \textbf{126} (2017), 35--71.

\bib{BFPR}{article}{
 Author = {Brown, Jonathan H.},author={ Fuller, Adam H.},author={Pitts, David R.},author={Reznikoff, Sarah A.},
 Title = {Regular ideals, ideal intersections, and quotients},
 FJournal = {Integral Equations and Operator Theory},
 Journal = {Integral Equations Oper. Theory},
 ISSN = {0378-620X},
 Volume = {96},
 Number = {1},
 Pages = {31},
 Note = {Id/No 3},
 Year = {2024},
}


\bibitem[BK18]{BK18} R.S. Bryder and M. Kennedy, \emph{Reduced twisted crossed products over C*-simple groups}, Int. Math. Res. Not. IMRN 2018, no. 6, 1638--1655.

\bib{CFP96}{article}{
    AUTHOR = {Cannon, J. W.},
    author={Floyd, W. J.},
    author={Parry, W. R.},
     TITLE = {Introductory notes on {R}ichard {T}hompson's groups},
   JOURNAL = {Enseign. Math. (2)},
  FJOURNAL = {L'Enseignement Math\'{e}matique. Revue Internationale. 2e S\'{e}rie},
    VOLUME = {42},
      YEAR = {1996},
    NUMBER = {3-4},
     PAGES = {215--256},
}

\bibitem[CN24]{CN24} J. Christensen and S. Neshveyev, \emph{Isotropy fibers of ideals in groupoid C*-algebras}, Adv. Math. \textbf{447} (2024), Paper No. 109696, 32 pp.


\bib{CELY17}{book}{,
    AUTHOR = {Cuntz, Joachim},author={Echterhoff, Siegfried},author={Li, Xin},author={Yu,
              Guoliang},
     TITLE = {{$K$}-theory for group {$C^*$}-algebras and semigroup
              {$C^*$}-algebras},
    SERIES = {Oberwolfach Seminars},
    VOLUME = {47},
 PUBLISHER = {Birkh\"{a}user/Springer, Cham},
      YEAR = {2017},
     PAGES = {ix+319},
}

\bib{DP02}{book}{
AUTHOR = {Davey, B. A.},
author= {Priestley, H. A.},
     TITLE = {Introduction to lattices and order},
   EDITION = {2nd ed.},
 PUBLISHER = {Cambridge University Press, New York},
      YEAR = {2002},
}

\bibitem[DE17]{DE17} M. Dokuchaev and R. Exel, \emph{The ideal structure of algebraic partial crossed products}, Proc. Lond. Math. Soc. (3) \textbf{115} (2017), no. 1, 91--134.

\bibitem[DM14]{DM14} A. Dudko and K. Medynets, \emph{Finite factor representations of Higman--Thompson groups}, Groups Geom. Dyn., \textbf{8} (2014) no. 2, 375--389.

\bib{E17}{book}{
 Author = {Exel, Ruy},
 Title = {Partial dynamical systems, {Fell} bundles and applications},
 FSeries = {Mathematical Surveys and Monographs},
 Series = {Math. Surv. Monogr.},
 ISSN = {0076-5376},
 Volume = {224},
 ISBN = {978-1-4704-3785-5; 978-1-4704-4236-1},
 Year = {2017},
 Publisher = {Providence, RI: American Mathematical Society (AMS)},
}

\bib{F03}{article}{
    AUTHOR = {Furman, Alex},
     TITLE = {On minimal strongly proximal actions of locally compact
              groups},
   JOURNAL = {Israel J. Math.},
  FJOURNAL = {Israel Journal of Mathematics},
    VOLUME = {136},
      YEAR = {2003},
     PAGES = {173--187},
 }

\bib{GM24}{article}{
AUTHOR = {Gerasimova, Maria},author={Monod, Nicolas},
     TITLE = {A family of exotic group {C}*-algebras},
   JOURNAL = {Adv. Math.},
  FJOURNAL = {Advances in Mathematics},
    VOLUME = {442},
      YEAR = {2024},
     PAGES = {Paper No. 109594},
     }

\bib{G76}{book}{
 Author = {Glasner, Shmuel},
 Title = {Proximal flows},
 FSeries = {Lecture Notes in Mathematics},
 Series = {Lect. Notes Math.},
 ISSN = {0075-8434},
 Volume = {517},
 Year = {1976},
 Publisher = {Springer, Cham},
}

\bib{HO17}{article}{
 Author = {Haagerup, Uffe},
 author={Olesen, Kristian Knudsen},
 Title = {Non-inner amenability of the {Thompson} groups {{\(T\)}} and {{\(V\)}}},
 FJournal = {Journal of Functional Analysis},
 Journal = {J. Funct. Anal.},
 ISSN = {0022-1236},
 Volume = {272},
 Number = {11},
 Pages = {4838--4852},
 Year = {2017},
 }

 \bib{HM08}{article}{
 Author = {Hadwin, Don},author={ Ma, Xiujuan},
 Title = {A note on free products},
 FJournal = {Operators and Matrices},
 Journal = {Oper. Matrices},
 ISSN = {1846-3886},
 Volume = {2},
 Number = {1},
 Pages = {35--65},
 Year = {2008},
 }

\bib{KK17}{article}{
   author={Kalantar, Mehrdad},
   author={Kennedy, Matthew},
   title={Boundaries of reduced {{\(C^\ast\)}}-algebras of discrete groups},
   journal={J. reine angew. Math.},
   volume={727},
   date={2017},
   pages={247--267},
}

\bib{KS222}{article}{
    AUTHOR = {Kalantar, Mehrdad},
author={Scarparo, Eduardo},
     TITLE = {Boundary maps and covariant representations},
   JOURNAL = {Bull. Lond. Math. Soc.},
  FJOURNAL = {Bulletin of the London Mathematical Society},
    VOLUME = {54},
      YEAR = {2022},
    NUMBER = {5},
     PAGES = {1944--1961}
}

\bib{KS22}{article}{
    AUTHOR = {Kalantar, Mehrdad},
author={Scarparo, Eduardo},
     TITLE = {Boundary maps, germs and quasi-regular representations},
   JOURNAL = {Adv. Math.},
  FJOURNAL = {Advances in Mathematics},
    VOLUME = {394},
      YEAR = {2022},
     PAGES = {Paper No. 108130, 31},
 }

\bib{KM20}{article}{
    AUTHOR = {Kwa\'{s}niewski, Bartosz Kosma},
    author = {Meyer, Ralf},
     TITLE = {Stone duality and quasi-orbit spaces for generalised {${\rm
              C}^\ast$}-inclusions},
   JOURNAL = {Proc. Lond. Math. Soc. (3)},
  FJOURNAL = {Proceedings of the London Mathematical Society. Third Series},
    VOLUME = {121},
      YEAR = {2020},
    NUMBER = {4},
     PAGES = {788--827},
}

\bibitem[LBMB18]{LBMB18} A. Le Boudec and N. Matte Bon, \emph{Subgroup dynamics and C*-simplicity of groups of homeomorphisms}, Ann. Sci. Éc. Norm. Supér. (4) \textbf{51} (2018), no. 3, 557--602.

\bib{O14}{unpublished}{
  title={Lecture on the {F}urstenberg boundary and {{\(C^{\ast}\)}}-simplicity},
  author={Ozawa, Narutaka},
  year={2014},
  note={Available at \url{http://www.kurims.kyoto-u.ac.jp/~narutaka/notes/yokou2014.pdf}},
}


\bib{R74}{article}{
 Author = {Rieffel, Marc A.},
 Title = {Induced representations of {C}{{\(^*\)}}-algebras},
 FJournal = {Advances in Mathematics},
 Journal = {Adv. Math.},
 ISSN = {0001-8708},
 Volume = {13},
 Pages = {176--257},
 Year = {1974}
}

\bib{S77}{article}{
    AUTHOR = {Sauvageot, Jean-Luc},
     TITLE = {Id\'{e}aux primitifs de certains produits crois\'{e}s},
   JOURNAL = {Math. Ann.},
  FJOURNAL = {Mathematische Annalen},
    VOLUME = {231},
      YEAR = {1977/78},
    NUMBER = {1},
     PAGES = {61--76},
}

\end{thebibliography}
\end{document}